\newtheorem*{Model}{Model}
\newtheorem{Assumption}{Assumption}
\newtheorem{Thm}{Theorem}
\newtheorem{Fact}{Fact}
\newtheorem{Lem}{Lemma}
\newtheorem{Cor}{Corollary}
\DeclareMathOperator*{\argmin}{argmin}
\newcommand*{\tran}{^{\mkern-1.5mu\mathsf{T}}}
\newcommand{\script}[1]{{{\cal{#1} }}}
\newcommand{\norm}[1]{\Vert{#1}\Vert}
\newcommand{\expect}[1]{\mathbb{E}\left[#1\right]}
\title{Learning Aided Optimization for Energy Harvesting Devices with Outdated State Information}
\author{Hao Yu and Michael J. Neely 
\thanks{Hao Yu is with DAMO academy at Alibaba Group (US) and was with the Electrical Engineering department at the University of Southern California. Michael J. Neely is with the Electrical Engineering department at the University of Southern California. This work is supported in part by grant NSF CCF-1718477.} 
\thanks{This work was presented in part at IEEE International Conference on Computer Communications (INFOCOM), Honolulu, USA, Apr. 2018 \cite{YuNeely18INFOCOM} }}
\begin{document}
\maketitle
\thispagestyle{fancy}

\begin{abstract}
This paper considers utility optimal power control for energy harvesting wireless devices with a finite capacity battery. The distribution information of the underlying wireless environment and harvestable energy is unknown and only outdated system state information is known at the device controller. This scenario shares similarity with  Lyapunov opportunistic optimization and online learning but is different from both. By a novel combination of Zinkevich's online gradient learning technique and the drift-plus-penalty technique from Lyapunov opportunistic optimization, this paper proposes a learning-aided  algorithm that achieves utility within $O(\epsilon)$ of the optimal, for any desired $\epsilon>0$, by using a battery with an $O(1/\epsilon)$ capacity. The proposed algorithm has low complexity and makes power investment decisions based on system history, without requiring knowledge of the system state or its probability distribution. 
\end{abstract}

\begin{IEEEkeywords}
online learning; stochastic optimization; energy harvesting
\end{IEEEkeywords}

\section{Introduction}
Energy harvesting can enable self-sustainable and perpetual wireless devices. 
By harvesting energy from the environment and storing it in a battery for future use, we can significantly improve energy efficiency and device lifetime.  Harvested energy can come from solar, wind, vibrational, thermal, or even radio sources \cite{Paradiso05,Sudevalayam11,Ulukus15JSAC}. Energy harvesting has been identified as a key technology for wireless sensor networks \cite{Kansal07TECS}, internet of things (IoT) \cite{Kamalinejad15}, and 5G communication networks \cite{Hossian15}.  However, 
the development of harvesting algorithms is complex because the harvested energy is highly dynamic and the device environment and energy needs are also dynamic.  Efficient algorithms should learn when to take energy from the battery to power device tasks that bring high utility, and when to save energy for future use.

There have been large amounts of work developing efficient power control policies to maximize the utility of energy harvesting devices. Most of existing literature assumes the current system state is observable \cite{Ulukus12TCOM,Yener12TWC, Blasco13TWC,Michelusi13TCOM, Shaviv16JSAC,Wu17TMC, Arafa17ISIT}. In the highly ideal case where the future system state (both the wireless channel sate and energy harvesting state) can be perfectly predicted, optimal power control strategies that maximize the throughput of wireless systems are considered in \cite{Ulukus12TCOM,Yener12TWC}. Dynamic power policies are developed in \cite{Shaviv16JSAC} and \cite{Arafa17ISIT} for energy harvesting scenarios with a fixed known utility function and i.i.d. energy arrivals. In a more realistic case with only the statistics and causal knowledge of the system state, power control policies based on Markov Decision Processes (MDP) are considered in \cite{Blasco13TWC,Michelusi13TCOM}.  The work \cite{Michelusi13TCOM} also develops reinforcement learning based approach to address more challenging scenarios with observable current system state but unknown statistical knowledge. However, the reinforcement learning based method is restricted to problems with finite system states and power actions, and discounted long term utilities. For the case with unknown statistical knowledge but observable current system state, work \cite{Wu17TMC} develops suboptimal power control policies based on approximation algorithms.

However, there is little work on the challenging scenario where neither the distribution information nor the system state information are known. In practice, the amount of harvested energy on each slot is known to us only after it arrives and is stored into the battery. Further, the wireless environment is often unknown before the power action is chosen. For example, the wireless channel state in a communication link is measured at the receiver side and then reported back to the transmitter with a time delay. If the fading channel varies very fast, the channel state feedback received at the transmitter can be outdated.  Another example is power control for sensor nodes that detect unknown targets where the state of targets is known only after the sensing action is performed. 

In this paper, we consider utility-optimal power control in an energy harvesting wireless device with outdated state information and unknown state distribution information. This problem setup is closely related to but different from the Lyapunov opportunistic power control considered in works \cite{Gatzianas10TWC,HuangNeely13TON,UrgaonkarNeely11Sigmetrics} with instantaneous wireless channel state information.  The policies developed in \cite{Gatzianas10TWC,HuangNeely13TON,UrgaonkarNeely11Sigmetrics} are allowed to adapt their power actions to the instantaneous system states on each slot, which are unavailable in our problem setup.  The problem setup in this paper is also closely related to online convex optimization where control actions are performed without knowing instantaneous system states \cite{Zinkevich03ICML,book_PredictionLearningGames,Shalev-Shwartz11FoundationTrends}. However, classical methods for online convex learning require the control actions to be chosen from a simple fixed set.   Recent developments on online convex learning with constraints either assume the constraints are known long term constraints or yield constraint violations that eventually grow to infinity \cite{Mahdavi12JMLR, Jenatton16ICML, YuNeely16ArxivOnlineOpt, NeelyYu17arXiv, YuNeely17NIPS}.
These methods do not apply to our problem since the power to be used can only be drained from the finite capacity battery whose backlog is time-varying and depends on previous actions.

By combining the drift-plus-penalty (DPP) technique for Lyapunov opportunistic optimization \cite{book_Neely10} and the online gradient learning technique for online convex optimization \cite{Zinkevich03ICML}, we develop a novel learning aided dynamic power control algorithm that can achieve an $O(\epsilon)$ optimal utility by using a battery with an $O(1/\epsilon)$ capacity for energy harvesting wireless devices with outdated state information.  The first part of this paper treats a system with independent and identically distributed (i.i.d.) states. Section \ref{section:non-iid} extends to non-i.i.d. cases that are not considered in our conference version \cite{YuNeely18INFOCOM}. The notations used in this paper are summarized in Table \ref{table:notation}.

\begin{table}[th]
	\label{table:notation}
	%\vskip -0.5in
	\begin{center}
		\begin{small}
				\begin{tabular}{ll}
					\toprule
					{\footnotesize \bf  Notation}  & {\footnotesize \bf Definition} \\
					\midrule
					$t$    & slot index  \\
					$e[t]$ & energy arrival on slot $t$\\
					$\mathbf{s}[t]$ & system state on slot $t$ \\
					$\omega[t]$ & $\omega[t] = (e[t], \mathbf{s}[t])$\\
					$\mathbf{p}[t]$ & $n$-dimensional power allocation vector on slot $t$\\
					$p^{\max}$ & maximum total power that can be used  \\ &per slot (see Assumption \ref{ass:basic})\\
					$e^{\max}$ & maximum energy arrival per slot (see Assumption \ref{ass:basic})\\
					$D_i$ &  subgradient bound of the $i$-th coordinate  \\ &of the utility  function (see Assumption \ref{ass:basic})\\
					$D$ & $D = \sqrt{\sum_{i=1}^n D_i^2}$ (see Assumption \ref{ass:basic})\\
					$D^{\max}$ & $\max\{D_1, \ldots, D_n\}$ (see Corollary \ref{cor:large-Q-decrease-p})\\
					$B$ & $B=(\max\{e^{\max}, p^{\max}\})^2$ (see Lemma \ref{lm:drift})\\
					$E^{\max}$ & battery capacity\\
					$U^{\ast}$ & utility bound (see Lemma \ref{lm:utility-upper-bound})\\
					$V$ & algorithm parameter (see Algorithm \ref{alg:new-alg})\\
					$Q[t]$ & algorithm parameter (see Algorithm \ref{alg:new-alg})\\
					$Q^{l}$ & $Q^{l} = \lceil V \rceil (D^{\max} + 2p^{\max} + e^{\max})$ (see Theorem \ref{thm:Q-bound})\\
					\bottomrule
				\end{tabular}
		\end{small}
	\end{center}
	\caption{A summary of notations used in this paper}
\end{table}

\section{Problem Formulation}
%\subsection{Problem setup}

Consider an energy harvesting wireless device that operates in normalized time slots $t\in\{1,2,\ldots\}$. Let $\omega[t] = (e[t],\mathbf{s}[t]) \in \Omega$ represent the system state on each slot $t$, where 
\begin{itemize} 
\item $e[t]$ is the amount of harvested energy for slot $t$ (for example, through solar, wind, radio signal, and so on). 
\item $\mathbf{s}[t]$ is the wireless device state on slot $t$ (such as the vector of channel conditions over multiple subbands).  
\item $\Omega$ is the state space for all $\omega[t] =(e[t],\mathbf{s}[t])$ states. 
\end{itemize} 
Assume $\{\omega[t]\}_{t=1}^{\infty}$ evolves in an independent and identically distributed (i.i.d.) manner according to an unknown distribution. Further, the state $\omega[t]$ is \emph{unknown} to the 
device \emph{until the end of slot $t$}. The device is powered by a finite-size battery. At the beginning of each slot $t\in\{1,2,\ldots\}$, the device 
draws energy from the battery and allocates it as an $n$-dimensional power decision vector $\mathbf{p}[t] = [p_1[t], \ldots, p_n[t]]\tran \in \mathcal{P}$ where $\mathcal{P}$ is a compact convex set  given by 
\begin{align*}
\mathcal{P} = \{\mathbf{p}\in \mathbb{R}^n: \sum_{i=1}^n p_i \leq p^{\max}, p_i \geq 0, \forall i\in
\{1,2,\ldots,n\}\}.
\end{align*}
Note that $n\geq1$ allows for power allocation over multiple orthogonal subbands and $p^{\max}$ is a given positive constant (restricted by hardware) and represents the maximum total power that can be used on each slot. The device receives a corresponding utility $U(\mathbf{p}[t]; \omega[t])$.  Since $\mathbf{p}[t]$ is chosen without knowledge of $\omega[t]$, the achieved utility is unknown until the end of slot $t$. 
 For each $\omega \in \Omega$, the utility function $U(\mathbf{p}; \omega)$ is assumed to be continuous and concave over $\mathbf{p} \in \mathcal{P}$. 
 An example is: 
 \begin{equation} \label{eq:log-utility}  
 U(\mathbf{p}; \omega) = \sum_{i=1}^n\log(1 + p_i[t]s_i[t]) 
 \end{equation} 
where $\mathbf{s}[t]=(s_1[t], \ldots, s_n[t])$ is the vector of (unknown) channel conditions over $n$ orthogonal subbands available to the wireless device. In this example, $p_i[t]$ represents the amount of power invested over subband $i$ in a rateless coding transmission scenario, and $U(p[t]; \omega[t])$ is the total throughput achieved on slot $t$.  We focus on fast time-varying wireless channels, e.g., communication scenarios with high mobility transceivers, where $\mathbf{s}[t]$ known at the transmitter is outdated since $\mathbf{s}[t]$ must be measured at the receiver side and then reported back to the transmitter with a time delay.
 
 \subsection{Further examples} 
 
 The above formulation admits a variety of other useful application scenarios.  For example, 
 it can be used to treat power control in cognitive radio systems.  Suppose an energy limited
 secondary user harvests energy and  operates over licensed spectrum occupied by primary users. In this case, $\mathbf{s}[t]=(s_1[t], \ldots, s_n[t])$ represents the channel activity of primary users over each subband. Since primary users are not controlled by the secondary user, $\mathbf{s}[t]$ is only known to the secondary user at the end of slot $t$.

Another application is a wireless sensor system. Consider an energy harvesting sensor node that collects information by detecting an unpredictable target. In this case, $\mathbf{s}[t]$ can be the state or action of the target on slot $t$. By using $\mathbf{p}[t]$ power  for signaling and sensing, we receive utility $U(\mathbf{p}[t]; \omega[t])$, which depends on state $\omega[t]$. For example, in a monitoring system, if the monitored target performs an action $\mathbf{s}[t]$ that we are not interested in, then the reward $U(\mathbf{p}[t]; \omega[t])$ by using $\mathbf{p}[t]$ is small.  Note that $\mathbf{s}[t]$ is typically unknown to us at the beginning of slot $t$ and is only disclosed to us at the end of slot $t$.

\subsection{Basic assumption} 
\begin{Assumption} \label{ass:basic}~
\begin{itemize}
\item  There exist a constant $e^{\max}>0$ such that $0\leq e[t] \leq e^{\max}, \forall t\in\{1,2,\ldots\}$.  
\item  Let $\nabla_\mathbf{p} U(\mathbf{p}; \omega)$ denote a subgradient (or gradient if $U(\mathbf{p}; \omega)$ is differentiable) vector of $U(\mathbf{p};\omega)$ with respect to $\mathbf{p}$ and let $\frac{\partial }{\partial p_i}U(\mathbf{p}; \omega), \forall i\in\{1,2,\ldots,n\}$ denote each component of vector $\nabla_\mathbf{p}U(\mathbf{p}; \omega)$.  There exist positive constants $D_1, \ldots, D_n$ such that  $\vert \frac{\partial }{\partial p_i}U(\mathbf{p}; \omega)\vert \leq D_i, \forall i\in\{1,2,\dots, n\}$ for all $\omega \in \Omega$ and all $\mathbf{p}\in \mathcal{P}$. This further implies there exists $D>0$, e.g., $D = \sqrt{\sum_{i=1}^n D_i^2}$, such that $ \norm{ \nabla_\mathbf{p} U (\mathbf{p}; \omega) }\leq D$ for all $\omega \in \Omega$ and all $\mathbf{p}\in \mathcal{P}$, where $\norm{\mathbf{x}}=\sqrt{\sum_{i=1}^nx_i^2}$ is the standard $l_2$ norm. 
\end{itemize}
\end{Assumption}

Such constants  $D_1, \ldots, D_n$ exist in most cases of interest, such as for utility functions \eqref{eq:log-utility}  with bounded $s_i[t]$ values. \footnote{This is always true when $s_i[t]$ are wireless signal strength attenuations.} 
The following fact follows directly from Assumption \ref{ass:basic}. Note that if $U(\mathbf{p}, \omega)$ is differentiable with respect to $\mathbf{p}$, then Fact \ref{fact:basic} holds even for non-concave $U(\mathbf{p}, \omega)$.
\begin{Fact}\label{fact:basic} [Lemma 2.6 in \cite{Shalev-Shwartz11FoundationTrends}]
For each $\omega \in \Omega$, $U(\mathbf{p};\omega)$ is $D$-Lipschitz over $\mathbf{p} \in \mathcal{P}$, i.e., 
$|U(\mathbf{p}_1; \omega) - U(\mathbf{p}_2; \omega)| \leq D\norm{\mathbf{p}_1-\mathbf{p}_2}, \forall \mathbf{p}_1, \mathbf{p}_2 \in \mathcal{P}. $
\end{Fact}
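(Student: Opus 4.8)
The plan is to exploit the uniform subgradient bound $\norm{\nabla_\mathbf{p}U(\mathbf{p};\omega)}\leq D$ from Assumption \ref{ass:basic} together with the convexity of $\mathcal{P}$. Fix $\omega\in\Omega$ and any two points $\mathbf{p}_1,\mathbf{p}_2\in\mathcal{P}$. Since $U(\cdot;\omega)$ is concave, the defining subgradient inequality gives $U(\mathbf{p}_2;\omega)\leq U(\mathbf{p}_1;\omega)+\nabla_\mathbf{p}U(\mathbf{p}_1;\omega)\tran(\mathbf{p}_2-\mathbf{p}_1)$, where $\nabla_\mathbf{p}U(\mathbf{p}_1;\omega)$ is any subgradient at $\mathbf{p}_1$. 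Applying the Cauchy--Schwarz inequality to the inner product and then the bound $\norm{\nabla_\mathbf{p}U(\mathbf{p}_1;\omega)}\leq D$ yields the one-sided estimate $U(\mathbf{p}_2;\omega)-U(\mathbf{p}_1;\omega)\leq D\norm{\mathbf{p}_2-\mathbf{p}_1}$.

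Next I would swap the roles of $\mathbf{p}_1$ and $\mathbf{p}_2$, using a subgradient at $\mathbf{p}_2$, to obtain the reverse inequality $U(\mathbf{p}_1;\omega)-U(\mathbf{p}_2;\omega)\leq D\norm{\mathbf{p}_1-\mathbf{p}_2}$. Combining the two one-sided bounds and noting $\norm{\mathbf{p}_1-\mathbf{p}_2}=\norm{\mathbf{p}_2-\mathbf{p}_1}$ delivers the claimed $|U(\mathbf{p}_1;\omega)-U(\mathbf{p}_2;\omega)|\leq D\norm{\mathbf{p}_1-\mathbf{p}_2}$, which is exactly $D$-Lipschitzness.

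For the differentiable but possibly non-concave case noted in the remark preceding the statement, I would instead parametrize the segment by $\phi(\tau)=U(\mathbf{p}_1+\tau(\mathbf{p}_2-\mathbf{p}_1);\omega)$ for $\tau\in[0,1]$; convexity of $\mathcal{P}$ guarantees the argument stays in $\mathcal{P}$, so the gradient bound applies along the whole segment. The fundamental theorem of calculus then gives $U(\mathbf{p}_2;\omega)-U(\mathbf{p}_1;\omega)=\int_0^1 \nabla_\mathbf{p}U(\mathbf{p}_1+\tau(\mathbf{p}_2-\mathbf{p}_1);\omega)\tran(\mathbf{p}_2-\mathbf{p}_1)\,d\tau$, and bounding the integrand pointwise by Cauchy--Schwarz and $D$ recovers the same Lipschitz estimate.

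The argument is essentially mechanical, and I do not anticipate a genuine obstacle. The only point that requires care is that both the concave subgradient inequality and the segment-integration argument lean on $\mathcal{P}$ being convex, so that the intermediate points and the subgradients evaluated there remain valid; beyond that, the main thing to get right is invoking the correct orientation of the subgradient inequality in each of the two orderings of $\mathbf{p}_1$ and $\mathbf{p}_2$.
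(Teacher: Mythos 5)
Your proof is correct and is essentially the intended argument: the paper supplies no proof of its own but cites Lemma 2.6 of \cite{Shalev-Shwartz11FoundationTrends}, whose proof is exactly your combination of the concave supergradient inequality at each endpoint with Cauchy--Schwarz and the bound $\norm{\nabla_\mathbf{p}U(\mathbf{p};\omega)}\leq D$ from Assumption \ref{ass:basic}. Your segment-integration variant also correctly covers the differentiable non-concave case noted in the remark preceding the Fact, so nothing is missing.
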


\subsection{Power control and energy queue model}

The finite size battery can be considered as backlog in an \emph{energy queue}.  Let $E[0]$ be the initial energy backlog in the battery and $E[t]$ be the energy stored in the battery at the {\bf end} of slot $t$. The power vector $\mathbf{p}[t]$ 
must satisfy the following \emph{energy availability constraint}:
\begin{align}
\mbox{$\sum_{i=1}^n$} p_i[t] \leq E[t-1], \forall t\in\{1,2,\ldots\}. \label{eq:energy-availability}
\end{align}
which requires the consumed power to be no more than what is available in the battery.

Let $E^{\max}$ be the maximum capacity of the battery. If the energy availability constraint \eqref{eq:energy-availability} is satisfied on each slot, the energy queue backlog $E[t]$ evolves as follows:
\begin{align}
E[t] =\min\{E[t-1] - \mbox{$\sum_{i=1}^n$} p_i[t] + e[t], E^{\max}\}, \forall t.  \label{eq:energy-queue-dynamic}
\end{align}

\subsection{An upper bound problem}

Let $\omega[t] = (e[t], s[t])$ be the random state vector on slot $t$.  Let $\expect{e} = \expect{e[t]}$ denote the expected 
amount of new energy that arrives in one slot. Define a function $h:\mathcal{P}\rightarrow\mathbb{R}$ by 
$$ h(\mathbf{p}) = \expect{U(\mathbf{p};\omega[t])}. $$
Since $U(\mathbf{p};\omega)$ is concave in $\mathbf{p}$ for all $\omega$ by Assumption \ref{ass:basic} and is $D$-Lipschitz over $p \in \mathcal{P}$ for all $\omega$ by Fact \ref{fact:basic}, we know $h(\mathbf{p})$ is concave and continuous.

The function $h$ is typically unknown because the distribution of $\omega$ is unknown. 
However, to establish a fundamental bound, suppose both $h$ and $\mathbb{E}[e]$ are known and 
consider choosing a fixed vector $\mathbf{p}$ to solve the following deterministic problem: 
\begin{align}
\max_{\mathbf{p}}~&  h(\mathbf{p}) \label{eq:deterministic-obj}\\
\text{s.t.}~~~&  \sum_{i=1}^n  p_i - \mathbb{E}[e] \leq 0 \label{eq:deterministic-energy-balance}\\
& \mathbf{p}\in \script{P} \label{eq:deterministic-box-cons}
\end{align}  
where  constraint \eqref{eq:deterministic-energy-balance} requires that the consumed energy is no more than $\mathbb{E}[e]$. 

Let $\mathbf{p}^\ast$ be an optimal solution of problem \eqref{eq:deterministic-obj}-\eqref{eq:deterministic-box-cons} and $U^\ast$ be its corresponding utility value of \eqref{eq:deterministic-obj}. Define a \emph{causal policy} as one that, on each slot $t$, selects $\mathbf{p}[t] \in \mathcal{P}$ based only on information up to the start of slot $t$ (in particular, without knowledge
of $\omega[t]$).   Since $\omega[t]$ is i.i.d. over slots, any causal policy must have $\mathbf{p}[t]$ and $\omega[t]$ independent for all $t$. The next lemma shows that no causal policy $\mathbf{p}[t], t\in\{1,2,\ldots\}$ satisfying \eqref{eq:energy-availability}-\eqref{eq:energy-queue-dynamic} can attain a better utility than $U^\ast$.  

\begin{Lem} \label{lm:utility-upper-bound}
Let $\mathbf{p}[t]\in \mathcal{P}, t\in\{1,2,\ldots\}$ be yielded by any causal policy that 
consumes less energy than it harvests in the long term, so $\limsup_{T\rightarrow\infty} \frac{1}{T}\sum_{t=1}^{T} \expect{ \sum_{i=1}^n p_i[t]}  \leq \expect{e}$. Then, 
$$\limsup_{T\rightarrow \infty} \frac{1}{T} \sum_{t=1}^{T}\mathbb{E}[U(\mathbf{p}[t]; \omega[t])] \leq U^\ast.$$
where $U^\ast$ is the optimal utility value of problem \eqref{eq:deterministic-obj}-\eqref{eq:deterministic-box-cons}. 
\end{Lem}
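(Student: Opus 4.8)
The plan is to reduce the time-averaged expected utility of any causal policy to the value of $h$ at a single feasible point of the deterministic problem \eqref{eq:deterministic-obj}-\eqref{eq:deterministic-box-cons}, and then invoke optimality of $U^\ast$. The first and conceptually most important step exploits causality. Since the policy is causal, $\mathbf{p}[t]$ depends only on information available before slot $t$, while $\omega[t]$ is i.i.d. and independent of that history; hence $\mathbf{p}[t]$ and $\omega[t]$ are independent. Conditioning on $\mathbf{p}[t]$ and using that $\omega[t]$ has the common distribution of $\omega$ gives $\expect{U(\mathbf{p}[t];\omega[t])} = \expect{h(\mathbf{p}[t])}$ for every $t$.

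Next I would collapse the double average, over slots $t$ and over the randomness of $\mathbf{p}[t]$, into a single expectation and apply Jensen's inequality. Define the averaged action $\hat{\mathbf{p}}_T = \frac{1}{T}\sum_{t=1}^{T}\expect{\mathbf{p}[t]}$. Since each $\mathbf{p}[t]\in\mathcal{P}$ almost surely and $\mathcal{P}$ is convex, $\hat{\mathbf{p}}_T\in\mathcal{P}$. Because $h$ is concave (as established in the excerpt), Jensen's inequality applied to the mixture that draws $t$ uniformly on $\{1,\ldots,T\}$ and then samples $\mathbf{p}[t]$ yields $\frac{1}{T}\sum_{t=1}^{T}\expect{h(\mathbf{p}[t])} \leq h(\hat{\mathbf{p}}_T)$. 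Combining with the independence identity gives $\frac{1}{T}\sum_{t=1}^{T}\expect{U(\mathbf{p}[t];\omega[t])} \leq h(\hat{\mathbf{p}}_T)$ for every $T$.

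It remains to take $\limsup_{T\to\infty}$ and bound the right-hand side by $U^\ast$. Here I would use compactness of $\mathcal{P}$: choose a subsequence $T_k$ along which $h(\hat{\mathbf{p}}_{T_k})$ converges to $\limsup_T h(\hat{\mathbf{p}}_T)$, and pass to a further subsequence so that $\hat{\mathbf{p}}_{T_k}\to\mathbf{p}^\infty\in\mathcal{P}$. Continuity of $h$ then gives $\limsup_T h(\hat{\mathbf{p}}_T) = h(\mathbf{p}^\infty)$. The energy hypothesis $\limsup_T \sum_{i=1}^n (\hat{\mathbf{p}}_T)_i \leq \expect{e}$, together with convergence of the coordinate sums along the subsequence, forces $\sum_{i=1}^n p_i^\infty \leq \expect{e}$, since a convergent subsequence cannot exceed the limsup of the full sequence. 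Thus $\mathbf{p}^\infty$ is feasible for \eqref{eq:deterministic-obj}-\eqref{eq:deterministic-box-cons}, so $h(\mathbf{p}^\infty)\leq U^\ast$ by optimality, and chaining the inequalities gives the claim.

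The main obstacle lies less in any single step than in correctly sequencing the limsup with the subsequence extraction: the energy constraint is assumed only as a limsup over all $T$, so feasibility of the limit point $\mathbf{p}^\infty$ must be deduced from the fact that a subsequential limit of the coordinate sums is dominated by the full-sequence limsup, rather than from any per-slot or per-$T$ constraint. Getting the independence step right, which is where causality together with the i.i.d. assumption is essential, and handling this limit interchange carefully are the two places where attention is needed; the Jensen and compactness arguments are otherwise routine.
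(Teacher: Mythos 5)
Your proposal is correct and follows essentially the same route as the paper's own proof: the causality/i.i.d. independence identity $\expect{U(\mathbf{p}[t];\omega[t])}=\expect{h(\mathbf{p}[t])}$, Jensen's inequality applied to the time-averaged power vector, and a Bolzano--Weierstrass subsequence extraction with continuity of $h$ to produce a feasible limit point whose utility bounds the limsup. Your closing remark about deducing feasibility of the limit point from the full-sequence limsup is exactly the step the paper performs when passing from \eqref{eq:by-assumption} to the constraint on $\mathbf{p}_0$.
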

\begin{proof} 
Fix a slot $t\in\{1,2,\ldots\}$.  Then 
\begin{equation} \label{eq:causal}
 \expect{U(\mathbf{p}[t]; \omega[t])} \overset{(a)}{=} \expect{\expect{U(\mathbf{p}[t];\omega[t])|\mathbf{p}[t]}} \overset{(b)}{=} \expect{h(\mathbf{p}[t])}
 \end{equation} 
where (a) holds by iterated expectations; (b) holds because $\mathbf{p}[t]$ and $\omega[t]$ are independent (by causality).

For each $T>0$ define $\bar{\mathbf{p}}[T] = [\bar{p}_1[T], \ldots, \bar{p}_n[T]]\tran$ with 
 $$ \bar{p}_i [T]= \frac{1}{T}\sum_{t=1}^T \expect{p_i[t]}, \forall i\in\{1,2,\ldots,n\}.$$  We know by assumption that: 
\begin{equation} \label{eq:by-assumption} 
\limsup_{T\rightarrow\infty}  \sum_{i=1}^n \bar{p}_i[T] \leq \expect{e} 
\end{equation} 
Further, since $\mathbf{p}[t]\in \mathcal{P}$ for all slots $t$,
it holds that $\bar{\mathbf{p}}[T] \in \mathcal{P}$ for all $T>0$. 
Also, 
\begin{align*}
\frac{1}{T}\sum_{t=1}^T\mathbb{E}[U(\mathbf{p}[t]; \omega[t])] &\overset{(a)}{=} \frac{1}{T}\sum_{t=1}^T \expect{h(\mathbf{p}[t])} \\
&\overset{(b)}{\leq} h\left(\expect{\mbox{$\frac{1}{T}\sum_{t=1}^T$} \mathbf{p}[t]} \right) \\
&= h(\bar{\mathbf{p}}[T])
\end{align*}
where (a) holds by \eqref{eq:causal}; (b) holds by Jensen's inequality for the concave function $h$.  It follows that: 
$$ \limsup_{T\rightarrow\infty} \frac{1}{T}\sum_{t=1}^T\mathbb{E}[U(\mathbf{p}[t]; \omega[t])] \leq \limsup_{T\rightarrow\infty} h(\bar{\mathbf{p}}[T]).$$
Define $\theta =  \limsup_{T\rightarrow\infty} h(\bar{\mathbf{p}}[T])$. It suffices to show that $\theta \leq U^{\ast}$. 
Since $\bar{\mathbf{p}}[T]$ is in the compact set $\mathcal{P}$ for all $T>0$, the Bolzano-Wierstrass theorem ensures there
is a subsequence of times $T_k$ such that $\bar{\mathbf{p}}[T_k]$ converges to a fixed vector $\mathbf{p}_0 \in \mathcal{P}$ and 
$h(\bar{\mathbf{p}}[T_k])$ converges to $\theta$ as $k\rightarrow \infty$: 
\begin{align*}
\lim_{k\rightarrow\infty} \bar{\mathbf{p}}[T_k] &= \mathbf{p}_0 \in \mathcal{P} \\
\lim_{k\rightarrow\infty} h(\bar{\mathbf{p}}[T_k]) &= \theta 
\end{align*}
Continuity of $h$ implies that $h(\mathbf{p}_0)=\theta$. 
By \eqref{eq:by-assumption} the vector $\mathbf{p}_0=[p_{0,1}, \ldots, p_{0,n}]\tran$ must satisfy $\sum_{i=1}^n p_{0,i} \leq \expect{e}$. Hence, $\mathbf{p}_0$ is a vector that 
satisfies constraints 
\eqref{eq:deterministic-energy-balance}-\eqref{eq:deterministic-box-cons} and achieves utility $h(\mathbf{p}_0) = \theta$. Since $U^\ast$ is defined as the optimal utility value to problem 
\eqref{eq:deterministic-obj}-\eqref{eq:deterministic-box-cons}, it holds that $\theta \leq U^{\ast}$. 
\end{proof} 
Note that the $U^\ast$ utility upper bound of Lemma \ref{lm:utility-upper-bound} holds for any policy that 
consumes no more energy than it harvests in the long term.  Policies that satisfy the physical battery constraints \eqref{eq:energy-availability}-\eqref{eq:energy-queue-dynamic} certainly consume no more energy than harvested in the long term.  However,  Lemma \ref{lm:utility-upper-bound} even holds for policies that violate these physical battery constraints.  For example, $U^\ast$ is still a valid bound for a policy that is allowed to ``borrow" energy from an external power source when its battery is empty and ``return" energy when its battery is full. Note that utility upper bounds were previously developed in \cite{Srivastava13TON, Arafa17ISIT} for energy harvesting problems with a fixed non-decreasing concave utility function. However, this paper considers power allocation for energy harvesting scenarios with time-varying stochastic concave utility functions.

\section{New Algorithm}

This subsection proposes a new learning aided dynamic power control algorithm that chooses power control actions based on system history, without requiring the current system state or its probability distribution. 
\subsection{New Algorithm}
\begin{algorithm}
\caption{New Algorithm}
\label{alg:new-alg}
Let $V>0$ be a constant algorithm parameter. Initialize virtual battery queue variable $Q[0] = 0$. Choose $\mathbf{p}[1] = [0, 0, \ldots, 0]\tran$ as the power action on slot $1$.  At the \emph{end} of each slot $t\in\{1,2,\ldots\}$, observe $\omega[t] = (e[t],\mathbf{s}[t])$ and do the following:
\begin{itemize}[leftmargin=10pt]
\item {\bf Update virtual battery queue $Q[t]$}: Update $Q[t]$ via:
\begin{align}
Q[t] = \min\{Q[t-1] + e[t] - \sum_{i=1}^n p_i[t],0\}. \label{eq:virtual-queue-Q-update}
\end{align} 
\item {\bf Power control}: Choose 
{\small
\begin{align}
\mathbf{p}[t+1] = \mbox{Proj}_{\mathcal{P}}\Big\{ \mathbf{p}[t]  + \frac{1}{V} \nabla_\mathbf{p} U(\mathbf{p}[t]; \omega[t])  +\frac{1}{V^2}Q[t] \mathbf{1}\Big\} \label{eq:power-update}
\end{align}}%
as the power action for the next slot $t+1$ where $\mbox{Proj}_{\mathcal{P}}\{\cdot\}$ represents the projection onto set $\mathcal{P}$, $\mathbf{1}$ denotes a column vector of all ones and $\nabla_\mathbf{p} U(\mathbf{p}[t]; \omega[t]) $ represents a subgradient (or gradient if $U(\mathbf{p}; \omega[t])$ is differentiable) vector of function $U(\mathbf{p}; \omega[t])$ at point $\mathbf{p} = \mathbf{p}[t]$.  Note that $\mathbf{p}[t]$, $Q[t]$ and $\nabla_\mathbf{p} U(\mathbf{p}[t]; \omega[t])$ are given constants in \eqref{eq:power-update}.
\end{itemize}
\end{algorithm}
The new dynamic power control algorithm is described in Algorithm \ref{alg:new-alg}. At the end of slot $t$, Algorithm \ref{alg:new-alg} chooses $\mathbf{p}[t+1]$  based on $\omega[t]$ without requiring $\omega[t+1]$.  To enable these decisions, the algorithm introduces a (nonpositive)  \emph{virtual} battery queue process $Q[t]\leq 0$, which shall later be shown to be related to a shifted version of the physical battery queue $E[t]$. (See e.g., equation \eqref{eq:E-Q-shift} in Theorem \ref{thm:energy-availability}.)

Note that Algorithm \ref{alg:new-alg} does not explicitly enforce the energy availability constraint \eqref{eq:energy-availability}. Let $\mathbf{p}[t+1]$ be given by \eqref{eq:power-update}, one may expect to use 
\begin{align}
\hat{\mathbf{p}}[t+1] = \frac{\min\{ \sum_{i=1}^n p_i[t+1], E[t]\}}{\sum_{i=1}^n p_i[t+1]} \mathbf{p}[t+1]\label{eq:explicit-energy-availability}
\end{align}
that scales down $\mathbf{p}[t+1]$ to enforce the energy availability constraint \eqref{eq:energy-availability}. However, our analysis in Section \ref{sec:performance-analysis} shows that if the battery capacity is at least as large as an $O(V)$ constant, then directly 
using $\mathbf{p}[t+1]$ from \eqref{eq:power-update} is ensured to always 
satisfy the energy availability constraint \eqref{eq:energy-availability}. Thus, there is no need to take the additional step \eqref{eq:explicit-energy-availability}.

\subsection{Algorithm Intuitions} \label{sec:alg-intuition}
\begin{Lem} \label{lm:power-control-minimization-interpret} The power control action $\mathbf{p}[t+1]$ chosen in \eqref{eq:power-update} is to solve the following quadratic convex program
\begin{align}
\max_{\mathbf{p}}~ & V(\nabla_\mathbf{p} U(\mathbf{p}[t]; \omega[t]))\tran (\mathbf{p} - \mathbf{p}[t]) +Q[t] \mathbf{1}\tran \mathbf{p} \nonumber \\ &- \frac{V^{2}}{2} \Vert \mathbf{p} - \mathbf{p}[t]\Vert^2   \label{eq:opt-power-control-obj}\\
\text{s.t.}\quad &  \mathbf{p} \in \mathcal{P} \label{eq:opt-power-control-cons}
\end{align}
\end{Lem}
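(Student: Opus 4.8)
The plan is to recognize that \eqref{eq:power-update} is literally a Euclidean projection, that projecting onto a convex set is itself the maximizer of a negative squared-distance objective, and that expanding that objective and discarding all terms independent of the decision variable recovers \eqref{eq:opt-power-control-obj} exactly. First I would abbreviate $\mathbf{g} = \nabla_\mathbf{p} U(\mathbf{p}[t]; \omega[t])$ and $\mathbf{z} = \mathbf{p}[t] + \frac{1}{V}\mathbf{g} + \frac{1}{V^2}Q[t]\mathbf{1}$, so that \eqref{eq:power-update} reads $\mathbf{p}[t+1] = \mbox{Proj}_{\mathcal{P}}\{\mathbf{z}\}$. By the standard definition of the Euclidean projection onto the nonempty compact convex set $\mathcal{P}$, we have $\mbox{Proj}_{\mathcal{P}}\{\mathbf{z}\} = \argmin_{\mathbf{p}\in\mathcal{P}} \norm{\mathbf{p}-\mathbf{z}}^2$. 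Since negating the objective and scaling it by the positive constant $V^2/2$ does not change the optimizer, I can equivalently write $\mathbf{p}[t+1] = \argmax_{\mathbf{p}\in\mathcal{P}} \left(-\frac{V^2}{2}\norm{\mathbf{p}-\mathbf{z}}^2\right)$.

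Next I would expand this quadratic and drop every term that is free of $\mathbf{p}$, since such terms do not affect the $\argmax$. Writing $-\frac{V^2}{2}\norm{\mathbf{p}-\mathbf{z}}^2 = -\frac{V^2}{2}\norm{\mathbf{p}}^2 + V^2 \mathbf{z}\tran \mathbf{p} + \mathrm{const}$ and substituting $V^2\mathbf{z} = V^2\mathbf{p}[t] + V\mathbf{g} + Q[t]\mathbf{1}$, the $\mathbf{p}$-dependent part becomes $-\frac{V^2}{2}\norm{\mathbf{p}}^2 + V^2\mathbf{p}[t]\tran\mathbf{p} + V\mathbf{g}\tran\mathbf{p} + Q[t]\mathbf{1}\tran\mathbf{p}$. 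I would then expand the objective of \eqref{eq:opt-power-control-obj}, namely $V\mathbf{g}\tran(\mathbf{p}-\mathbf{p}[t]) + Q[t]\mathbf{1}\tran\mathbf{p} - \frac{V^2}{2}\norm{\mathbf{p}-\mathbf{p}[t]}^2$, in the same fashion; after discarding the constant pieces $-V\mathbf{g}\tran\mathbf{p}[t]$ and $-\frac{V^2}{2}\norm{\mathbf{p}[t]}^2$, its $\mathbf{p}$-dependent part is again $-\frac{V^2}{2}\norm{\mathbf{p}}^2 + V^2\mathbf{p}[t]\tran\mathbf{p} + V\mathbf{g}\tran\mathbf{p} + Q[t]\mathbf{1}\tran\mathbf{p}$, matching the previous expression term by term. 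Hence the two objectives differ only by an additive constant on $\mathcal{P}$.

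Finally, because two objectives that agree up to an additive constant on the same feasible set share the same maximizers, the solution of \eqref{eq:opt-power-control-obj}-\eqref{eq:opt-power-control-cons} coincides with $\mathbf{p}[t+1]$ from \eqref{eq:power-update}. I would add that the objective is strictly concave in $\mathbf{p}$ (owing to the $-\frac{V^2}{2}\norm{\cdot}^2$ term with $V>0$) and $\mathcal{P}$ is compact and convex, so the maximizer is unique and the statement is unambiguous. There is no genuine obstacle here beyond careful bookkeeping; the only points deserving attention are that negating and scaling the squared distance by the positive factor $V^2/2$ preserves the optimizer, and that all terms not involving $\mathbf{p}$ — including the $\norm{\mathbf{z}}^2$, $\norm{\mathbf{p}[t]}^2$, and $\mathbf{g}\tran\mathbf{p}[t]$ contributions — may be freely discarded when comparing the two $\argmax$ problems.
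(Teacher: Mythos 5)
Your proposal is correct and follows essentially the same route as the paper's proof: interpret \eqref{eq:power-update} as the Euclidean projection, i.e., the minimizer of $\Vert \mathbf{p}-\mathbf{z}\Vert^2$ over $\mathcal{P}$, then expand the square, discard $\mathbf{p}$-independent terms, and negate/rescale to obtain \eqref{eq:opt-power-control-obj}-\eqref{eq:opt-power-control-cons}. Your additional remarks on the optimizer-preserving scaling by $V^2/2$ and on uniqueness via strict concavity are fine but not needed for the equivalence.
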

\begin{proof}  By the definition of projection, equation \eqref{eq:power-update} is to solve $\min_{\mathbf{p}\in \mathcal{P}} \Vert \mathbf{p}- \big(\mathbf{p}[t] + \frac{1}{V} \nabla_\mathbf{p} U(\mathbf{p}[t]; \omega[t]) + \frac{1}{V^2}Q[t]\mathbf{1}\big) \Vert^2$. By expanding the square, eliminating constant terms and converting the minimization to a maximization of its negative object, it is easy to show this problem is equivalent to problem \eqref{eq:opt-power-control-obj}-\eqref{eq:opt-power-control-cons}.
\end{proof}

The convex projection \eqref{eq:power-update}, or equivalently, the quadratic convex program \eqref{eq:opt-power-control-obj}-\eqref{eq:opt-power-control-cons} can be easily solved. See e.g., Lemma 3 in \cite{YuNeely17INFOCOM} for an algorithm that solves an $n$-dimensional quadratic program over set $\mathcal{P}$ with complexity $O(n\log n)$. Thus, the overall complexity of  Algorithm \ref{alg:new-alg} is low.

\begin{enumerate}[leftmargin=15pt]
\item \emph{Connections with the drift-plus-penalty (DPP) technique for Lyapunov opportunistic optimization}: The Lyapunov opportunistic optimization solves stochastic optimization without distribution information by developing dynamic policies that adapt control actions to the current system state \cite{Tassiulas92TAC,Neely05JSAC,Eryilmaz06JSAC,Stolyar05,Neely08TON,book_Neely10}. The dynamic policy from Lyapunov opportunistic optimization can be interpreted as choosing control actions to maximize a DPP expression on each slot. Unfortunately,  the problem considered in this paper is different from the conventional Lyapunov opportunistic optimization problem since the power decision cannot be adapted to the unknown current system state. Nevertheless, if we treat $V (\nabla_\mathbf{p} U(\mathbf{p}[t]; \omega[t]))\tran (\mathbf{p} - \mathbf{p}[t]) -\frac{V^2}{2} \Vert \mathbf{p} - \mathbf{p}[t] \Vert^2$ as a penalty term and $Q[t]\mathbf{1}\tran \mathbf{p}$ as a drift term, then Lemma \ref{lm:power-control-minimization-interpret} suggests that the power control in Algorithm \ref{alg:new-alg} can still be interpreted as maximizing a (different) DPP expression. However, this DPP expression is significantly different from those conventional ones used in Lyapunov opportunistic optimization \cite{book_Neely10}.  Also, the penalty term $VU(\mathbf{p}[t+1]; \omega[t+1])$ used in conventional Lyapunov opportunistic optimization of \cite{book_Neely10} is unavailable in our problem since it depends on the unknown $\omega[t+1]$. 
%The drift expression used in our problem 
%is also different. 

\item \emph{Connections with online convex learning}: Online convex learning is a multi-round process where a decision maker selects its action from a \emph{fixed} set at each round before observing the corresponding utility function \cite{Zinkevich03ICML,book_PredictionLearningGames, Shalev-Shwartz11FoundationTrends}. If we assume the wireless device is equipped with an external free power source with infinite energy, i.e., the energy availability constraint \eqref{eq:energy-availability} is dropped, then the problem setup in this paper is similar to an online learning problem where the decision maker selects $\mathbf{p}[t+1] \in \mathcal{P}$ on each slot $t+1$ to maximize an unknown reward function $U(\mathbf{p}[t+1]; \omega[t+1])$ based on the information of previous reward functions $U(\mathbf{p}[\tau]; \omega[\tau]), \tau\in\{1,\ldots,t\}$. In this case, Zinkevich's online gradient method \cite{Zinkevich03ICML}, given by 
\begin{align}
\mathbf{p}[t+1] = \mbox{Proj}_{\mathcal{P}} \{ \mathbf{p}[t] + \gamma \nabla_\mathbf{p} U(\mathbf{p}[t]; \omega[t]) \} \label{eq:Zinkevich}
\end{align}
where $\gamma$ is a learning rate parameter, can solve this idealized problem.  In fact, if we ignore $\frac{1}{V^2}Q[t] \mathbf{1}$ involved in \eqref{eq:power-update}, then \eqref{eq:power-update} is identical to Zinkevich's learning algorithm with  $\gamma = 1/V$.  However, Zinkevich's algorithm and its variations \cite{Zinkevich03ICML,Hazan07ML, Shalev-Shwartz11FoundationTrends} require actions to be chosen from a \emph{fixed} set.  Our problem requires $\mathbf{p}[t]$ chosen on each slot $t$ to satisfy the energy availability constraint \eqref{eq:energy-availability}, which is time-varying since $E[t]$ evolves over time based on random energy arrivals and  previous power allocation decisions. 
\end{enumerate}

Now, it is clear why Algorithm \ref{alg:new-alg} is called a learning aided dynamic power control algorithm:  Algorithm \ref{alg:new-alg} can be viewed as an enhancement of the DPP technique originally developed for Lyapunov opportunistic optimization by replacing its penalty term with an expression used in Zinkevich's online gradient learning. 

\subsection{Main Results}
While the above subsection provides intuitive connections to prior work, note that existing techniques cannot be applied to our problem. The next section develops a novel performance analysis (summarized in Theorems \ref{thm:utility-performance} and \ref{thm:energy-availability}) to show that if $E[0] = E^{\max} = O(V)$, then the power control actions from Algorithm \ref{alg:new-alg} are ensured to satisfy the energy availability constraint \eqref{eq:energy-availability} and achieve
%$$\limsup_{t\rightarrow\infty}\frac{1}{t}\sum_{\tau=1}^{t} \mathbb{E}[U(\mathbf{p}[\tau]; \omega[\tau])] \geq U^{\ast}  -  O(\frac{1}{V}).$$
$$\frac{1}{t}\sum_{\tau=1}^{t} \mathbb{E}[U(\mathbf{p}[\tau]; \omega[\tau])] \geq U^{\ast} -  O(\frac{V}{t}) - O(\frac{1}{V}). $$
That is, for any desired $\epsilon>0$, by choosing $V = 1/\epsilon$ in Algorithm \ref{alg:new-alg}, 
we can attain an $O(\epsilon)$ optimal utility for all $ t\geq \Omega(\frac{1}{\epsilon^{2}})$ by using a battery with capacity $O(1/\epsilon)$.

\section{Performance Analysis of Algorithm \ref{alg:new-alg}} \label{sec:performance-analysis}
This section shows Algorithm \ref{alg:new-alg} can attain an $O(\epsilon)$ close-to-optimal utility by using a battery with capacity $O(1/\epsilon)$.  

\subsection{Drift Analysis}
Define $L[t] = \frac{1}{2} (Q[t])^2$ and call it a \emph{Lyapunov function}. Define the \emph{Lyapunov drift} as 
\begin{align*}
\Delta[t] = L[t+1] - L[t]
\end{align*}
\begin{Lem} \label{lm:drift}
Under Algorithm \ref{alg:new-alg}, for all $t\geq0$, the Lyapunov drift satisfies 
\begin{align}
\Delta[t] \leq Q[t] (e[t+1] - \sum_{i=1}^{n} p_{i}[t+1])   +\frac{1}{2}B
\end{align}
with constant $B = (\max\{e^{\max}, p^{\max}\})^2$, where $e^{\max}$ is the constant defined in Assumption \ref{ass:basic}.
\end{Lem}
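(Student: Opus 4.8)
The plan is to bound the one-slot change of $L[t] = \frac{1}{2}(Q[t])^2$ directly from the virtual queue update \eqref{eq:virtual-queue-Q-update}. I would first introduce the shorthand $g[t+1] = e[t+1] - \sum_{i=1}^n p_i[t+1]$, so that the update at slot $t+1$ reads $Q[t+1] = \min\{Q[t] + g[t+1], 0\}$. This reframing isolates the only object whose sign is uncertain, namely $Q[t] + g[t+1]$, and lets me treat the drift as a purely deterministic, pathwise estimate (no expectation or distributional information about $\omega[t]$ is needed).

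The key step, and the one nonroutine observation in the whole argument, is the elementary inequality $(\min\{x,0\})^2 \leq x^2$, valid for every real $x$: if $x \leq 0$ the two sides coincide, and if $x > 0$ the left side is $0$ while the right side is positive. Applying this with $x = Q[t] + g[t+1]$ immediately yields $(Q[t+1])^2 \leq (Q[t] + g[t+1])^2$. I expect this to be the main (indeed only) point worth flagging, since it replaces the more tedious case analysis on the sign of $Q[t] + g[t+1]$ that one would otherwise carry out; here the nonpositivity of $Q[t]$ is not even required.

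From there the computation is routine. I would expand $(Q[t] + g[t+1])^2 = (Q[t])^2 + 2 Q[t] g[t+1] + (g[t+1])^2$, subtract $(Q[t])^2$, and multiply by $\frac{1}{2}$ to get $\Delta[t] \leq Q[t] g[t+1] + \frac{1}{2}(g[t+1])^2$, whose first term is exactly $Q[t]\big(e[t+1] - \sum_{i=1}^n p_i[t+1]\big)$ as claimed. It remains to control $(g[t+1])^2$ by $B$: Assumption \ref{ass:basic} gives $0 \leq e[t+1] \leq e^{\max}$, and $\mathbf{p}[t+1] \in \mathcal{P}$ forces $0 \leq \sum_{i=1}^n p_i[t+1] \leq p^{\max}$, so $-p^{\max} \leq g[t+1] \leq e^{\max}$ and hence $(g[t+1])^2 \leq (\max\{e^{\max}, p^{\max}\})^2 = B$. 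Substituting this bound completes the proof. No step poses a genuine obstacle; the challenge is purely one of packaging the square-of-a-projection inequality cleanly rather than grinding through the sign cases by hand.
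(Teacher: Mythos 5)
Your proof is correct and follows essentially the same route as the paper's: both rest on the elementary inequality $(\min\{x,0\})^2 \leq x^2$ applied to the queue update \eqref{eq:virtual-queue-Q-update}, followed by expanding the square and bounding $\vert e[t+1] - \sum_{i=1}^n p_i[t+1]\vert \leq \max\{e^{\max}, p^{\max}\}$. The only difference is the cosmetic shorthand $g[t+1]$; there is nothing to correct.
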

\begin{proof}
Fix $t\geq 0$. Recall that for any $x\in \mathbb{R}$ if $y = \min\{x, 0\}$ then $y^2 \leq x^2$. It follows from  \eqref{eq:virtual-queue-Q-update} that 
\begin{align*}
(Q[t+1])^2 \leq (Q[t] + e[t+1] - \sum_{i=1}^{n}p_{i}[t+1])^2 .
\end{align*}
Expanding the square on the right side, dividing both sides by $2$ and rearranging terms yields $\Delta[t] \leq Q[t] (e[t+1] -\sum_{i=1}^{n}p_{i}[t+1])   + \frac{1}{2}(e[t+1] -\sum_{i=1}^{n}p_{i}[t+1])^2$.

This lemma follows by noting that $\vert e[t+1]  - \sum_{i=1}^{n}p_{i}[t+1]\vert \leq \max\{e^{\max}, p^{\max}\}$ since $0\leq \sum_{i=1}^{n}p_{i}[t+1]\leq p^{\max}$ and $0\leq e[t+1]\leq e^{\max}$.
\end{proof}

%Recall that a DPP bound is  the key for  performance analysis of Lyapunov opportunistic optimization \cite{book_Neely10}. In this subsection, we developed a new DPP bound for  Algorithm \ref{alg:new-alg} that will be useful for the analysis of our algortithm.

Recall that a function $f: \mathcal{Z}\mapsto \mathbb{R}$ is said to be \emph{strongly concave} with modulus $\alpha$ if there exists a constant $\alpha>0$ such that $f(\mathbf{z}) + \frac{1}{2} \alpha \Vert \mathbf{z} \Vert^2$ is concave on $\mathcal{Z}$. It is easy to show that if $f(\mathbf{z})$ is concave and $\alpha>0$, then $f(\mathbf{z}) - \frac{\alpha}{2} \Vert \mathbf{z} - \mathbf{z}_0\Vert^2$ is strongly concave with modulus $\alpha$ for any constant $\mathbf{z}_0$. The maximizer of a strongly concave function satisfies the following lemma: 

\begin{Lem} [Corollary 1 in \cite{YuNeely17SIOPT}]\label{lm:strong-convex-quadratic-optimality}
Let $\mathcal{Z} \subseteq \mathbb{R}^{n}$ be a convex set. Let function $f$ be strongly concave on $\mathcal{Z}$ with modulus $\alpha$ and $\mathbf{z}^{opt}$ be a global maximum of {\color{blue}$f$} on $\mathcal{Z}$. Then, $f(\mathbf{z}^{opt}) \geq f(\mathbf{z}) + \frac{\alpha}{2} \Vert \mathbf{z}^{opt} - \mathbf{z}\Vert^{2}$ for all $\mathbf{z}\in \mathcal{Z}$.
\end{Lem}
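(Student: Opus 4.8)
The plan is to prove the inequality directly from the definition of strong concavity by a convex-combination argument, deliberately avoiding any appeal to first-order (gradient) optimality conditions: the lemma permits a non-smooth $f$, so differentiating $f$ at $\mathbf{z}^{opt}$ would be illegitimate. First I would convert the definition ``$g(\mathbf{z}) := f(\mathbf{z}) + \frac{\alpha}{2}\Vert \mathbf{z}\Vert^2$ is concave'' into the equivalent chord inequality. For any $\mathbf{x}, \mathbf{y}\in\mathcal{Z}$ and $\lambda\in[0,1]$, concavity of $g$ together with the algebraic identity $\lambda\Vert\mathbf{x}\Vert^2 + (1-\lambda)\Vert\mathbf{y}\Vert^2 - \Vert\lambda\mathbf{x}+(1-\lambda)\mathbf{y}\Vert^2 = \lambda(1-\lambda)\Vert\mathbf{x}-\mathbf{y}\Vert^2$ gives
$$f(\lambda\mathbf{x}+(1-\lambda)\mathbf{y}) \geq \lambda f(\mathbf{x}) + (1-\lambda)f(\mathbf{y}) + \frac{\alpha}{2}\lambda(1-\lambda)\Vert\mathbf{x}-\mathbf{y}\Vert^2.$$

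Next I would specialize to $\mathbf{x} = \mathbf{z}$ (an arbitrary point of $\mathcal{Z}$), $\mathbf{y} = \mathbf{z}^{opt}$, and a free parameter $\lambda\in(0,1]$. Since $\mathcal{Z}$ is convex, the point $\lambda\mathbf{z}+(1-\lambda)\mathbf{z}^{opt}$ lies in $\mathcal{Z}$, so its value is at most $f(\mathbf{z}^{opt})$ because $\mathbf{z}^{opt}$ is a global maximizer. Substituting this bound on the left-hand side of the chord inequality and then cancelling the $(1-\lambda)f(\mathbf{z}^{opt})$ term yields $\lambda f(\mathbf{z}^{opt}) \geq \lambda f(\mathbf{z}) + \frac{\alpha}{2}\lambda(1-\lambda)\Vert\mathbf{z}-\mathbf{z}^{opt}\Vert^2$; dividing by $\lambda>0$ gives $f(\mathbf{z}^{opt}) \geq f(\mathbf{z}) + \frac{\alpha}{2}(1-\lambda)\Vert\mathbf{z}-\mathbf{z}^{opt}\Vert^2$. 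Finally, letting $\lambda\downarrow 0$ in this family of inequalities produces exactly $f(\mathbf{z}^{opt}) \geq f(\mathbf{z}) + \frac{\alpha}{2}\Vert\mathbf{z}^{opt}-\mathbf{z}\Vert^2$, which is the claim.

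The proof is short, and the only point needing real care—which I would flag as the main (mild) obstacle—is recovering the sharp constant $\frac{\alpha}{2}$. Applying the global-max property naively at the midpoint $\lambda = \frac{1}{2}$ produces a weaker constant; the device of keeping $\lambda$ free, dividing by $\lambda$ rather than a fixed number, and only afterwards sending $\lambda\downarrow 0$ is precisely what both recovers the correct coefficient and circumvents the need for $f$ to be differentiable. I would also double-check the quadratic identity above, since a sign or factor slip there is the most likely source of error.
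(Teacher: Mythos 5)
Your proof is correct. Note that the paper itself offers no proof of this lemma at all: the statement is imported verbatim as Corollary 1 of \cite{YuNeely17SIOPT}, so the "paper's proof" is a citation to external work, and your contribution is a genuinely self-contained replacement. Checking the details: the quadratic identity $\lambda\Vert\mathbf{x}\Vert^{2}+(1-\lambda)\Vert\mathbf{y}\Vert^{2}-\Vert\lambda\mathbf{x}+(1-\lambda)\mathbf{y}\Vert^{2}=\lambda(1-\lambda)\Vert\mathbf{x}-\mathbf{y}\Vert^{2}$ is right (expand the middle term and collect), so your strengthened chord inequality is exactly the Jensen-type characterization of modulus-$\alpha$ strong concavity; the substitution $\mathbf{x}=\mathbf{z}$, $\mathbf{y}=\mathbf{z}^{opt}$ is legitimate because $\lambda\mathbf{z}+(1-\lambda)\mathbf{z}^{opt}\in\mathcal{Z}$ by convexity; the cancellation of $(1-\lambda)f(\mathbf{z}^{opt})$ and division by $\lambda>0$ are valid; and since the resulting inequality $f(\mathbf{z}^{opt})\geq f(\mathbf{z})+\frac{\alpha}{2}(1-\lambda)\Vert\mathbf{z}-\mathbf{z}^{opt}\Vert^{2}$ holds for every $\lambda\in(0,1]$ with the left side independent of $\lambda$, sending $\lambda\downarrow 0$ recovers the sharp constant $\frac{\alpha}{2}$. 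Your flagged obstacle is also real: fixing $\lambda=\frac{1}{2}$ only yields the weaker constant $\frac{\alpha}{4}$, so the free-$\lambda$-then-limit device is precisely what is needed. What your route buys, beyond self-containment, is that it uses only the paper's own definition of strong concavity and the global-maximum property; it requires neither differentiability nor the existence of supergradients (which the usual first-order-condition proof invokes, and which can be delicate when $\mathbf{z}^{opt}$ sits on the boundary of $\mathcal{Z}$), so it matches the lemma's bare hypotheses exactly.
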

  
\begin{Lem}\label{lm:dpp-bound}
Let $U^{\ast}$ be the utility upper bound defined in Lemma \ref{lm:utility-upper-bound} and $\mathbf{p}^\ast $ be an optimal solution to problem \eqref{eq:deterministic-obj}-\eqref{eq:deterministic-box-cons} that attains $U^{\ast}$. At each iteration $t\in\{1,2,\ldots\}$, Algorithm \ref{alg:new-alg} guarantees 
\begin{align*}
V\mathbb{E}[U(\mathbf{p}[t]; \omega[t])] - \Delta[t] \geq V U^{\ast} +\frac{V^{2}}{2}\mathbb{E}[\Phi[t]] - \frac{D^{2} + B}{2}
\end{align*}
where $\Phi[t] =  \Vert \mathbf{p}^{\ast} - \mathbf{p}[t+1]\Vert^2- \Vert \mathbf{p}^{\ast} - \mathbf{p}[t]\Vert^2$, $D$ is the constant defined in Assumption \ref{ass:basic} and $B$ is the constant defined in Lemma \ref{lm:drift}.
\end{Lem}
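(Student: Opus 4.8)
The plan is to combine the variational characterization of $\mathbf{p}[t+1]$ from Lemma \ref{lm:power-control-minimization-interpret} with the strong-concavity optimality estimate of Lemma \ref{lm:strong-convex-quadratic-optimality}, and then fold in the drift bound of Lemma \ref{lm:drift}. First I would observe that the objective in \eqref{eq:opt-power-control-obj}, call it $g(\mathbf{p})$, is the sum of an affine function of $\mathbf{p}$ and the term $-\frac{V^2}{2}\norm{\mathbf{p}-\mathbf{p}[t]}^2$, hence is strongly concave over $\script{P}$ with modulus $V^2$. Since $\mathbf{p}[t+1]$ is its maximizer, Lemma \ref{lm:strong-convex-quadratic-optimality} applied with the feasible comparison point $\mathbf{p}^\ast \in \script{P}$ gives $g(\mathbf{p}[t+1]) \geq g(\mathbf{p}^\ast) + \frac{V^2}{2}\norm{\mathbf{p}[t+1]-\mathbf{p}^\ast}^2$. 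Writing out both sides of this inequality is the backbone of the argument.

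Next I would bound the two ``gradient-step'' pieces separately. On the $\mathbf{p}[t+1]$ side, the relevant term is $V(\nabla_\mathbf{p} U(\mathbf{p}[t];\omega[t]))\tran(\mathbf{p}[t+1]-\mathbf{p}[t]) - \frac{V^2}{2}\norm{\mathbf{p}[t+1]-\mathbf{p}[t]}^2$, which by completing the square (equivalently the bound $V\mathbf{a}\tran\mathbf{d} \le \frac12\norm{\mathbf{a}}^2 + \frac{V^2}{2}\norm{\mathbf{d}}^2$) is at most $\frac12\norm{\nabla_\mathbf{p} U(\mathbf{p}[t];\omega[t])}^2 \le \frac{D^2}{2}$, using Assumption \ref{ass:basic}. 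On the $\mathbf{p}^\ast$ side, concavity of $U(\cdot;\omega[t])$ gives the subgradient inequality $(\nabla_\mathbf{p} U(\mathbf{p}[t];\omega[t]))\tran(\mathbf{p}^\ast-\mathbf{p}[t]) \ge U(\mathbf{p}^\ast;\omega[t]) - U(\mathbf{p}[t];\omega[t])$. Substituting these two estimates and noticing that the remaining squared-distance terms combine precisely into $-\frac{V^2}{2}\norm{\mathbf{p}^\ast-\mathbf{p}[t]}^2 + \frac{V^2}{2}\norm{\mathbf{p}[t+1]-\mathbf{p}^\ast}^2 = \frac{V^2}{2}\Phi[t]$, I would obtain, after rearranging, a slot-wise bound of the form $VU(\mathbf{p}[t];\omega[t]) \ge VU(\mathbf{p}^\ast;\omega[t]) + \frac{V^2}{2}\Phi[t] - \frac{D^2}{2} + Q[t]\sum_i p_i^\ast - Q[t]\sum_i p_i[t+1]$.

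I would then introduce the drift. Using Lemma \ref{lm:drift} in the form $-\Delta[t] \ge -Q[t](e[t+1]-\sum_i p_i[t+1]) - \frac{B}{2}$ and adding it to the previous inequality, the two copies of $Q[t]\sum_i p_i[t+1]$ cancel, leaving the clean slot-wise estimate
\begin{align*}
VU(\mathbf{p}[t];\omega[t]) - \Delta[t] \ge VU(\mathbf{p}^\ast;\omega[t]) + \tfrac{V^2}{2}\Phi[t] - \tfrac{D^2+B}{2} + Q[t]\Big(\sum_i p_i^\ast - e[t+1]\Big).
\end{align*}

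The last step, and the one I expect to be the main obstacle, is disposing of the cross term $Q[t](\sum_i p_i^\ast - e[t+1])$ after taking expectations. Here I would use three facts in concert: $Q[t]\le 0$ by construction \eqref{eq:virtual-queue-Q-update}; $Q[t]$ is determined by the history through slot $t$ and is therefore independent of $e[t+1]$ by the i.i.d. assumption, so that $\expect{Q[t]\,e[t+1]} = \expect{Q[t]}\expect{e}$; and the feasibility of $\mathbf{p}^\ast$ for \eqref{eq:deterministic-energy-balance}, i.e. $\sum_i p_i^\ast \le \expect{e}$. Together these give $\expect{Q[t](\sum_i p_i^\ast - e[t+1])} = \expect{Q[t]}(\sum_i p_i^\ast - \expect{e}) \ge 0$, being the product of two nonpositive quantities. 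Taking expectations of the slot-wise estimate, dropping this nonnegative term, and using $\expect{U(\mathbf{p}^\ast;\omega[t])} = h(\mathbf{p}^\ast) = U^\ast$ (since $\mathbf{p}^\ast$ is deterministic) then yields exactly the claimed bound, with $\Delta[t]$ read as $\expect{\Delta[t]}$. The care needed is in keeping the chain of inequalities pointing the right way and in justifying the independence that makes the cross term's expectation factorize.
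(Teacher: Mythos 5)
Your proposal is correct and follows essentially the same route as the paper's own proof: invoking the strong concavity (modulus $V^2$) of the objective in \eqref{eq:opt-power-control-obj} together with Lemma \ref{lm:strong-convex-quadratic-optimality} at the comparison point $\mathbf{p}^\ast$, bounding the gradient cross term by $\frac{D^2}{2}+\frac{V^2}{2}\Vert\mathbf{p}[t+1]-\mathbf{p}[t]\Vert^2$, applying the subgradient inequality for concavity, cancelling the $Q[t]\sum_i p_i[t+1]$ terms against the drift bound of Lemma \ref{lm:drift}, and finally killing the cross term $Q[t](\sum_i p_i^\ast - e[t+1])$ in expectation via independence, $Q[t]\le 0$, and $\sum_i p_i^\ast \le \mathbb{E}[e]$. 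Your reading of $\Delta[t]$ as $\mathbb{E}[\Delta[t]]$ in the final bound matches how the paper uses the lemma in Theorem \ref{thm:utility-performance}.
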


\begin{proof}
Note that $\sum_{i=1}^{n} p_{i}^{\ast} \leq \mathbb{E}[e]$. Fix $t\in\{1,2,\ldots\}$. Note that $V(\nabla_\mathbf{p} U(\mathbf{p}[t]; \omega[t]))\tran(\mathbf{p} - \mathbf{p}[t]) + Q[t]\sum_{i=1}^np_i$ is a linear function with respect to $\mathbf{p}$. It follows that
{\small
\begin{align}
V \big(\nabla_\mathbf{p} U(\mathbf{p}[t]; \omega[t])\big)\tran (\mathbf{p} - \mathbf{p}[t]) + Q[t]\sum_{i=1}^{n} p_{i} - \frac{V^{2}}{2} \Vert \mathbf{p} - \mathbf{p}[t]\Vert^2 \label{eq:pf-lm-inequality-from-decision-eq1}
\end{align}}%
is strongly concave with respect to $\mathbf{p}\in \mathcal{P}$ with modulus $V^{2}$.  Since $\mathbf{p}[t+1]$ is chosen to maximize \eqref{eq:pf-lm-inequality-from-decision-eq1} over all $\mathbf{p}\in \mathcal{P}$,  and since $\mathbf{p}^{\ast} \in \mathcal{P}$, by Lemma \ref{lm:strong-convex-quadratic-optimality}, we have
\begin{align*}
&V \big(\nabla_\mathbf{p} U(\mathbf{p}[t]; \omega[t])\big)\tran (\mathbf{p}[t+1] - \mathbf{p}[t]) +Q[t]\sum_{i=1}^{n}p_{i}[t+1] \\&- \frac{V^{2}}{2} \Vert \mathbf{p}[t+1] - \mathbf{p}[t]\Vert^2 \\
\geq & V\big(\nabla_\mathbf{p} U(\mathbf{p}[t]; \omega[t])\big)\tran (\mathbf{p}^{\ast} - \mathbf{p}[t]) + Q[t]\sum_{i=1}^{n}p_{i}^{\ast} \nonumber \\&  - \frac{V^{2}}{2} \Vert \mathbf{p}^{\ast}- \mathbf{p}[t]\Vert^2 + \frac{V^{2}}{2} \Vert\mathbf{p}^{\ast} - \mathbf{p}[t+1]\Vert^2 \\
= &V\big(\nabla_\mathbf{p} U(\mathbf{p}[t]; \omega[t])\big)\tran (\mathbf{p}^{\ast} - \mathbf{p}[t]) + Q[t]\sum_{i=1}^{n}p_{i}^{\ast} + \frac{V^{2}}{2} \Phi[t]. 
\end{align*}
Subtracting $Q[t]e[t+1]$ from both sides and rearranging terms yields 
\begin{align*}
&V \big(\nabla_{\mathbf{p}}U(\mathbf{p}[t]; \omega[t])
\big)\tran(\mathbf{p}[t+1] - \mathbf{p}[t]) \\&+ Q[t](\sum_{i=1}^{n}p_{i}[t+1] -e[t+1]) \\
\geq&  V \big(\nabla_{\mathbf{p}}U(\mathbf{p}[t]; \omega[t]) \big)\tran(\mathbf{p}^{\ast} - \mathbf{p}[t]) + Q[t](\sum_{i=1}^{n}p_{i}^{\ast} -e[t+1])  \\&+\frac{V^{2}}{2} \Phi[t]+ \frac{V^{2}}{2} \Vert\mathbf{p}[t+1] - \mathbf{p}[t]\Vert^{2}.
\end{align*}
Adding $VU(\mathbf{p}[t]; \omega[t])$ to both sides and noting that $ U(\mathbf{p}[t]; \omega[t]) + (\nabla_\mathbf{p} U(\mathbf{p}[t]; \omega[t]))\tran (\mathbf{p}^{\ast} - \mathbf{p}[t]) \geq U(\mathbf{p}^{\ast}; \omega[t])$ by the concavity of $U(\mathbf{p}; \omega [t])$ yields
\begin{align*}
&VU(\mathbf{p}[t]; \omega[t]) + V \big(\nabla_\mathbf{p} U(\mathbf{p}[t]; \omega[t])\big)\tran (\mathbf{p}[t+1] - \mathbf{p}[t]) \\ &+ Q[t](\sum_{i=1}^{n}p_{i}[t+1]-e[t+1]) \\
\geq &  VU(\mathbf{p}^{\ast}; \omega[t]) + Q[t](\sum_{i=1}^{n}p_{i}^{\ast} -e[t+1])  +\frac{V^{2}}{2}\Phi[t]\\&+ \frac{V^{2}}{2} \Vert \mathbf{p}[t+1] - \mathbf{p}[t]\Vert^{2}. 
\end{align*}
Rearranging terms yields
\begin{align}
&VU(\mathbf{p}[t]; \omega[t]) + Q[t](\sum_{i=1}^{n}p_{i}[t+1] -e[t+1]) \nonumber\\
\geq &  VU(\mathbf{p}^{\ast}; \omega[t]) + Q[t](\sum_{i=1}^{n}p_i^{\ast} -e[t+1])+\frac{V^{2}}{2}\Phi[t]  \nonumber\\&+ \frac{V^{2}}{2} \Vert \mathbf{p}[t+1] - \mathbf{p}[t]\Vert^{2} \nonumber \\ &-V \big(\nabla_\mathbf{p} U(\mathbf{p}[t]; \omega[t])\big)\tran (\mathbf{p}[t+1] - \mathbf{p}[t])  \label{eq:pf-lm-dpp-eq1}
\end{align}
Note that 
\begin{align}
&V \big( \nabla_\mathbf{p} U(\mathbf{p}[t]; \omega[t])\big)\tran (\mathbf{p}[t+1] - \mathbf{p}[t]) \nonumber \\
\overset{(a)}{\leq}&  \frac{1}{2} \Vert \nabla_\mathbf{p} U(\mathbf{p}[t]; \omega[t]) \Vert^{2}  + \frac{V^{2}}{2} \Vert \mathbf{p}[t+1] - \mathbf{p}[t]\Vert^{2} \nonumber \\
\overset{(b)}{\leq}& \frac{1}{2}D^{2} + \frac{V^{2}}{2} \Vert \mathbf{p}[t+1] - \mathbf{p}[t]\Vert^{2} \label{eq:pf-lm-dpp-eq2}
\end{align}
where (a) follows by using basic inequality $\mathbf{x}\tran\mathbf{y} \leq \frac{1}{2}\Vert \mathbf{x}\Vert^{2} + \frac{1}{2}\Vert \mathbf{y}\Vert^{2}$ for all $\mathbf{x}, \mathbf{y}\in \mathbb{R}^{n}$ with $\mathbf{x} = \nabla_\mathbf{p} U(\mathbf{p}[t]; \omega[t])$ and $\mathbf{y} = V(\mathbf{p}[t+1] - \mathbf{p}[t])$; and (b) follows from Assumption \ref{ass:basic}.
Substituting \eqref{eq:pf-lm-dpp-eq2} into \eqref{eq:pf-lm-dpp-eq1} yields
\begin{align}
&VU(\mathbf{p}[t]; \omega[t]) + Q[t](\sum_{i=1}^{n}p_{i}[t+1] -e[t+1]) \nonumber\\
\geq &  VU(\mathbf{p}^{\ast}; \omega[t]) + Q[t](\sum_{i=1}^{n}p_{i}^{\ast}-e[t+1])+\frac{V^{2}}{2}\Phi[t]-\frac{1}{2}D^{2} \label{eq:pf-lm-dpp-eq3}
\end{align}
By Lemma \ref{lm:drift}, we have 
\begin{align}
-\Delta[t] \geq Q[t] (\sum_{i=1}^{n}p_{i}[t+1]  - e[t+1]) -\frac{B}{2}  \label{eq:pf-lm-dpp-eq4}
\end{align}
Summing \eqref{eq:pf-lm-dpp-eq3} and \eqref{eq:pf-lm-dpp-eq4}; and cancelling common terms on both sides yields
\begin{align}
&VU(\mathbf{p}[t]; \omega[t]) -\Delta[t] \nonumber \\
\geq&  VU(\mathbf{p}^{\ast}; \omega[t]) + Q[t](\sum_{i=1}^{n}p_{i}^{\ast}- e[t+1]) +\frac{V^{2}}{2} \Phi[t] \nonumber \\ &-\frac{D^{2} +B}{2} \label{eq:pf-lm-dpp-eq7}
\end{align}
Note that each $Q[t]$ (depending only on $e[\tau], p[\tau]$ with $\tau\in\{1,2,\ldots,t\}$) is independent of $e[t+1]$. Thus,
\begin{align}
&\mathbb{E}[Q[t](\sum_{i=1}^{n}p_{i}^{\ast}- e[t+1])] \nonumber\\
=& \mathbb{E}[Q[t]]\mathbb{E}[\sum_{i=1}^{n}p_{i}^{\ast} - e[t+1]] \nonumber\\
\overset{(a)}{\geq}& 0 \label{eq:pf-lm-dpp-eq8}
\end{align}
where (a) follows because $Q[t]\leq 0$ and $\sum_{i=1}^{n}p_{i}^{\ast} \leq \mathbb{E}[e]$ (recall that $e[t+1]$ is an i.i.d. sample of $e$). 

Taking expectations on both sides of \eqref{eq:pf-lm-dpp-eq7} and using \eqref{eq:pf-lm-dpp-eq8} and $\mathbb{E}[U(\mathbf{p}^{\ast}; \omega[t])] = U^{\ast}$ yields the desired result.
\end{proof}

\subsection{Utility Optimality Analysis}
The next theorem summarizes that the average expected utility attained by Algorithm \ref{alg:new-alg} is  within an $O(1/V)$ distance to $U^{\ast}$ defined in Lemma \ref{lm:utility-upper-bound}. 

\begin{Thm} \label{thm:utility-performance}
Let $U^{\ast}$ be the utility bound defined in Lemma \ref{lm:utility-upper-bound}. For all $t\in\{1,2,\ldots\}$, Algorithm \ref{alg:new-alg} guarantees
\begin{align}
\frac{1}{t}\sum_{\tau=1}^{t} \mathbb{E}[U(\mathbf{p}[\tau]; \omega[\tau])] \geq U^{\ast} - \frac{V(p^{\max})^{2}}{2t} -\frac{B}{2Vt} -  \frac{D^{2} + B}{2V} \label{eq:thm-utility-performance-eq1}
\end{align}
where  $D$ is the constant defined in Assumption \ref{ass:basic} and $B$ is the constant defined in Lemma \ref{lm:drift}. This implies, 
\begin{align}
\limsup_{t\rightarrow\infty}\frac{1}{t}\sum_{\tau=1}^{t} \mathbb{E}[U(\mathbf{p}[\tau]; \omega[\tau])] \geq U^{\ast}  -  \frac{D^{2} + B}{2V}.\label{eq:thm-utility-performance-eq2}
\end{align}
In particular, if we take $V = 1/\epsilon$ in Algorithm \ref{alg:new-alg}, then 
\begin{align}
\frac{1}{t} \sum_{\tau=1}^{t}\mathbb{E}[U(\mathbf{p}[\tau]; \omega[\tau])] \geq U^{\ast} -O(\epsilon), \forall t\geq \Omega(\frac{1}{\epsilon^{2}}).\label{eq:thm-utility-performance-eq3}
\end{align}
\end{Thm}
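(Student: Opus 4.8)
The plan is to derive all three claims by summing the per-slot drift-plus-penalty bound of Lemma \ref{lm:dpp-bound} over $\tau\in\{1,\ldots,t\}$ and telescoping. The conceptual content is already contained in that lemma, so what remains is a summation with careful bookkeeping of the initial and terminal terms.

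First I would rearrange Lemma \ref{lm:dpp-bound} (reading $\Delta[\tau]$ as $\mathbb{E}[\Delta[\tau]]$, consistent with the lemma's proof, which takes expectations at the end) into a lower bound on $V\mathbb{E}[U(\mathbf{p}[\tau];\omega[\tau])]$, and sum over $\tau\in\{1,\ldots,t\}$. Two of the resulting sums telescope: the drift sum collapses to $\mathbb{E}[L[t+1]]-\mathbb{E}[L[1]]$, and the potential sum to $\frac{V^2}{2}\big(\mathbb{E}[\Vert\mathbf{p}^\ast-\mathbf{p}[t+1]\Vert^2]-\mathbb{E}[\Vert\mathbf{p}^\ast-\mathbf{p}[1]\Vert^2]\big)$. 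I would then evaluate the boundary terms using the initialization in Algorithm \ref{alg:new-alg}: since $Q[0]=0$ and $\mathbf{p}[1]=[0,\ldots,0]\tran$, update \eqref{eq:virtual-queue-Q-update} gives $Q[1]=\min\{e[1],0\}=0$, so $L[1]=0$, while $\mathbb{E}[\Vert\mathbf{p}^\ast-\mathbf{p}[1]\Vert^2]=\Vert\mathbf{p}^\ast\Vert^2\leq(\sum_{i}p_i^\ast)^2\leq(p^{\max})^2$ because $\mathbf{p}^\ast\in\mathcal{P}$. The two terminal terms $\mathbb{E}[L[t+1]]$ and $\frac{V^2}{2}\mathbb{E}[\Vert\mathbf{p}^\ast-\mathbf{p}[t+1]\Vert^2]$ are nonnegative and can be dropped. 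Dividing by $Vt$ then yields \eqref{eq:thm-utility-performance-eq1}; in fact this route produces the bound without the $-\frac{B}{2Vt}$ term, which only makes the claimed right-hand side smaller, so \eqref{eq:thm-utility-performance-eq1} follows a fortiori.

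Claim \eqref{eq:thm-utility-performance-eq2} is then immediate by taking $\limsup_{t\to\infty}$ of \eqref{eq:thm-utility-performance-eq1}, since the two transient terms $\frac{V(p^{\max})^2}{2t}$ and $\frac{B}{2Vt}$ vanish. For \eqref{eq:thm-utility-performance-eq3} I would substitute $V=1/\epsilon$, so the steady-state gap becomes $\frac{D^2+B}{2V}=\frac{(D^2+B)\epsilon}{2}=O(\epsilon)$; the dominant transient term is then $\frac{(p^{\max})^2}{2\epsilon t}$, which is $O(\epsilon)$ exactly when $t\geq\Omega(1/\epsilon^2)$, whereas $\frac{B\epsilon}{2t}=O(\epsilon^3)$ is negligible. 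Collecting these gives the $U^\ast-O(\epsilon)$ bound for all $t\geq\Omega(1/\epsilon^2)$.

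I do not expect a genuine obstacle here: the analytical difficulty---including the single use of the i.i.d./independence structure, via \eqref{eq:pf-lm-dpp-eq8}---is entirely absorbed into Lemma \ref{lm:dpp-bound}, so the summation is deterministic algebra on expectations. The only points requiring care are performing the two telescopings simultaneously, correctly evaluating the initial terms from $Q[0]=0$ and $\mathbf{p}[1]=[0,\ldots,0]\tran$, and verifying that the dropped terminal terms are indeed nonnegative.
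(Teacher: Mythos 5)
Your proposal is correct and follows essentially the same route as the paper's proof: sum Lemma \ref{lm:dpp-bound} over $\tau\in\{1,\ldots,t\}$, telescope both the drift sum and the potential sum, drop the nonnegative terminal terms, bound the initial terms, and divide by $Vt$, with \eqref{eq:thm-utility-performance-eq2} and \eqref{eq:thm-utility-performance-eq3} obtained exactly as in the paper. The only (harmless) difference is that you evaluate $Q[1]=0$ exactly (since $\mathbf{p}[1]=\mathbf{0}$ and $e[1]\geq 0$), which removes the $-\frac{B}{2Vt}$ term, whereas the paper uses the looser bound $\vert Q[1]\vert\leq\sqrt{B}$ and keeps that term; your slightly tighter bound implies the stated \eqref{eq:thm-utility-performance-eq1} a fortiori.
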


\begin{proof}
Fix $t\in\{1,2,\ldots\}$. For each $\tau \in\{1,2,\ldots, t\}$, by Lemma 
\ref{lm:dpp-bound}, we have $$\mathbb{E}[V U(\mathbf{p}[\tau]; \omega[\tau])] -\mathbb{E}[\Delta[\tau]] \geq V U^{\ast} +\frac{V^{2}}{2}\mathbb{E}[\Phi[\tau]] - \frac{D^{2} + B}{2}.$$
Summing over $\tau\in\{1,2,\ldots,t\}$, dividing both sides by $Vt$ and rearranging terms yields
\begin{align*}
&\frac{1}{t}\sum_{\tau=1}^{t} \mathbb{E}[U(\mathbf{p}[\tau]; \omega[\tau])] \\
\geq &U^{\ast}  + \frac{V}{2t} \sum_{\tau=1}^{t} \mathbb{E}[\Phi[\tau]] + \frac{1}{Vt} \sum_{\tau=1}^{t} \mathbb{E}[\Delta[\tau]] -  \frac{D^{2} + B}{2V}\\
\overset{(a)}{=}&U^{\ast} + \frac{V}{2t} \mathbb{E} [\Vert \mathbf{p}^{\ast} - \mathbf{p}[t+1]\Vert^2- \Vert \mathbf{p}^{\ast} - \mathbf{p}[1]\Vert^2] \\&
+  \frac{1}{2Vt} \mathbb{E}[(Q[t+1])^{2} - (Q[1])^{2}] -  \frac{D^{2} + B}{2V}\\
\geq&U^{\ast} - \frac{V}{2t} \mathbb{E} [\Vert \mathbf{p}^{\ast} - \mathbf{p}[1]\Vert^2] -\frac{1}{2Vt}\mathbb{E}[(Q[1])^{2}]-  \frac{D^{2} + B}{2V}\\
\overset{(b)}{\geq}&U^{\ast} - \frac{V(p^{\max})^{2}}{2t} -\frac{B}{2Vt} -  \frac{D^{2} + B}{2V}
\end{align*}
where (a) follows by recalling that $\Phi[\tau] =  \Vert \mathbf{p}^{\ast} - \mathbf{p}[\tau+1] \Vert ^2- \Vert \mathbf{p}^{\ast} - \mathbf{p}[\tau]\Vert^2$ and $\Delta[\tau] = \frac{1}{2}(Q[\tau+1])^{2} - \frac{1}{2}(Q[\tau])^{2}$; and (b) follows because $\Vert \mathbf{p}^{\ast} - \mathbf{p}[1]\Vert = \Vert \mathbf{p}^{\ast} \Vert  =\sqrt{\sum_{i=1}^n (p_i^\ast)^2} \leq \sum_{i=1}^n p_i^\ast \leq p^{\max}$ and $\vert Q[1]\vert = \vert Q[0] + e[1] - \sum_{i=1}^{n} p_{i}[1]\vert = \vert e[1] - \sum_{i=1}^{n} p_{i}[1]\vert \leq \max\{e^{\max}, p^{\max}\} = \sqrt{B}$ where $B$ is defined in Lemma \ref{lm:drift}. So far we have proven \eqref{eq:thm-utility-performance-eq1}. 

Equation  \eqref{eq:thm-utility-performance-eq2} follows directly by taking $\limsup$ on both sides of \eqref{eq:thm-utility-performance-eq1}. Equation \eqref{eq:thm-utility-performance-eq3} follows by substituting $V = \frac{1}{\epsilon}$ and $t = \frac{1}{\epsilon^{2}}$ into  \eqref{eq:thm-utility-performance-eq1}.
\end{proof}

\subsection{Lower Bound for Virtual Battery Queue $Q[t]$}
Note that $Q[t]\leq 0$ by \eqref{eq:virtual-queue-Q-update}. This subsection further shows that $Q[t]$ is bounded from below.  The projection $\mbox{Proj}_{\mathcal{P}}\{\cdot\}$ satisfies the following lemma:
\begin{Lem} \label{lm:proj-sol}
For any $\mathbf{p}[t]\in \mathcal{P}$ and vector $\mathbf{b} \leq \mathbf{0}$, where $\leq$ between two vectors means component-wisely less than or equal to,  $\tilde{\mathbf{p}} = \mbox{Proj}_{\mathcal{P}}\{\mathbf{p}[t] + \mathbf{b}\}$ is given by
\begin{align}
\tilde{p}_i = \max\{p_i[t] + b_i, 0\}, \forall i \in\{1,2,\ldots, n\} \label{eq:proj-lemma-sol}.
\end{align}
\end{Lem}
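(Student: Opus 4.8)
The plan is to recognize the claimed coordinate-wise formula as the Euclidean projection onto the nonnegative orthant and then exploit the nesting $\mathcal{P}\subseteq\mathbb{R}^n_+$, where $\mathbb{R}^n_+=\{\mathbf{p}\in\mathbb{R}^n:p_i\geq 0,\forall i\}$. First I would set $\mathbf{x}=\mathbf{p}[t]+\mathbf{b}$ and recall that the separable rule $\tilde{p}_i=\max\{x_i,0\}$ is exactly $\mbox{Proj}_{\mathbb{R}^n_+}\{\mathbf{x}\}$, obtained by minimizing the separable objective $\sum_{i=1}^n (p_i-x_i)^2$ subject only to $p_i\geq 0$. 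Since $\mathcal{P}$ is compact and convex and $\norm{\mathbf{p}-\mathbf{x}}^2$ is strictly convex, the projection onto $\mathcal{P}$ is well defined and unique, so it suffices to exhibit a single feasible minimizer.

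The crucial step is to verify that this orthant-projection $\tilde{\mathbf{p}}$ actually lies in the smaller set $\mathcal{P}$, and this is where both hypotheses enter. By construction $\tilde{p}_i\geq 0$ for every $i$, so the nonnegativity constraints hold. For the budget constraint $\sum_i p_i\leq p^{\max}$ I would establish the coordinate-wise bound $\tilde{p}_i\leq p_i[t]$: if $p_i[t]+b_i\geq 0$ then $\tilde{p}_i=p_i[t]+b_i\leq p_i[t]$ because $b_i\leq 0$, and if $p_i[t]+b_i<0$ then $\tilde{p}_i=0\leq p_i[t]$ because $p_i[t]\geq 0$ (using $\mathbf{p}[t]\in\mathcal{P}$). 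Summing over $i$ gives $\sum_{i=1}^n \tilde{p}_i\leq\sum_{i=1}^n p_i[t]\leq p^{\max}$, hence $\tilde{\mathbf{p}}\in\mathcal{P}$.

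Finally I would invoke the elementary nesting principle: whenever $S\subseteq T$ and $\mbox{Proj}_T\{\mathbf{x}\}\in S$, one has $\mbox{Proj}_S\{\mathbf{x}\}=\mbox{Proj}_T\{\mathbf{x}\}$. Concretely, for any $\mathbf{p}\in\mathcal{P}\subseteq\mathbb{R}^n_+$, the defining optimality of the orthant projection gives $\norm{\mathbf{p}-\mathbf{x}}\geq\norm{\tilde{\mathbf{p}}-\mathbf{x}}$, so $\tilde{\mathbf{p}}$ also minimizes the distance to $\mathbf{x}$ over $\mathcal{P}$, which yields $\tilde{\mathbf{p}}=\mbox{Proj}_{\mathcal{P}}\{\mathbf{x}\}$ and hence \eqref{eq:proj-lemma-sol}. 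The only real obstacle is the middle step, since it is precisely there that the sign condition $\mathbf{b}\leq\mathbf{0}$ together with $\mathbf{p}[t]\in\mathcal{P}$ is indispensable: without $\mathbf{b}\leq\mathbf{0}$ the orthant projection could overshoot the budget $p^{\max}$, the active sum constraint would then bind, and the simple coordinate-wise formula would fail.
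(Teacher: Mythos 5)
Your proof is correct and is essentially the paper's own argument in more modular packaging: the paper's explicit objective lower bound $\sum_{i\in\mathcal{I}}(p_i[t]+b_i)^2$ (valid for any nonnegative $\mathbf{p}$) is precisely the squared distance from $\mathbf{p}[t]+\mathbf{b}$ to the nonnegative orthant, so its ``feasible point attains the lower bound'' step is exactly your ``orthant projection lies in $\mathcal{P}$, then apply the nesting principle'' step. Both proofs hinge on the same two facts — the coordinate-wise formula solves the relaxation that keeps only the constraints $p_i\geq 0$, and the hypotheses $\mathbf{b}\leq\mathbf{0}$ and $\mathbf{p}[t]\in\mathcal{P}$ force $\tilde{p}_i\leq p_i[t]$ so the budget constraint $\sum_{i=1}^n p_i\leq p^{\max}$ remains satisfied.
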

\begin{proof}
Recall that projection $ \mbox{Proj}_{\mathcal{P}}\{\mathbf{p}[t] + \mathbf{b}\}$ by definition is to solve 
\begin{align}
\min_{\mathbf{p}}~ &  \sum_{i=1}^n (p_i - (p_i[t] + b_i))^2 \label{eq:pf-lm-proj-sol-eq1} \\
\text{s.t.}\quad &   \sum_{i=1}^n p_i \leq p^{\max} \label{eq:pf-lm-proj-sol-eq2}\\
& p_i \geq 0, \forall i\in\{1,2,\ldots, n\} \label{eq:pf-lm-proj-sol-eq3}
\end{align}
Let $\mathcal{I} \subseteq\{1,2,\ldots,n\}$ be the coordinate index set given by $\mathcal{I} = \{i\in \{1,2,\ldots,n\}: p_i[t] + b_j <0\}$.  For any $\mathbf{p}$ such that $\sum_{i=1}^n p_i \leq p^{\max}$ and $p_i \geq 0, \forall i\in\{1,2,\ldots, n\}$, we have 
\begin{align*}
&\sum_{i=1}^n (p_i - (p_i[t] + b_i))^2 \\
=& \sum_{i\in \mathcal{I}} (p_i - (p_i[t] + b_i))^2 + \sum_{i\in \{1,2,\ldots, n\}\setminus \mathcal{I}} (p_i - (p_i[t] + b_i))^2\\
\geq &\sum_{i\in \mathcal{I}} (p_i - (p_i[t] + b_i))^2\\
\overset{(a)}{\geq}&   \sum_{i\in \mathcal{I}} (p_i[t] + b_i)^2
\end{align*}
where (a) follows because $p_i[t] + b_i < 0$ for $i\in \mathcal{I}$ and $p_i \geq 0, \forall i\in\{1,2,\ldots,n\}$. Thus, $\sum_{i\in \mathcal{I}} (p_i[t] + b_i)^2$ is an object value lower bound of problem \eqref{eq:pf-lm-proj-sol-eq1}-\eqref{eq:pf-lm-proj-sol-eq3}.

Note that $\tilde{\mathbf{p}}$ given by \eqref{eq:proj-lemma-sol} is feasible to problem \eqref{eq:pf-lm-proj-sol-eq1}-\eqref{eq:pf-lm-proj-sol-eq3} since $\tilde{p}_i \geq 0, \forall i\in\{1,2,\ldots, n\}$ and $\sum_{i=1}^n \tilde{p}_i \leq \sum_{i=1}^n p_i[t] \leq p^{\max}$ because 
$\tilde{p}_i \leq p_i[t]$ for all $i$  and   $\mathbf{p}[t]\in \mathcal{P}$.  We further note that 
$$\sum_{i=1}^n (\tilde{p_i}- (p_i[t] + b_i))^2 = \sum_{i\in \mathcal{I}} (p_i[t] + b_i)^2.$$ 
That is, $\tilde{\mathbf{p}}$ given by \eqref{eq:proj-lemma-sol} attains the object value lower bound of problem \eqref{eq:pf-lm-proj-sol-eq1}-\eqref{eq:pf-lm-proj-sol-eq3} and hence is the optimal solution to problem \eqref{eq:pf-lm-proj-sol-eq1}-\eqref{eq:pf-lm-proj-sol-eq3}. Thus,  $\tilde{\mathbf{p}} = \mbox{Proj}_{\mathcal{P}}\{\mathbf{p}[t] + \mathbf{b}\}$.
\end{proof}

\begin{Cor}\label{cor:large-Q-decrease-p}
If $Q[t] \leq - V(D^{\max} +p^{\max})$ with $D^{\max} = \max\{D_1, \ldots, D_n\}$, then Algorithm \ref{alg:new-alg} guarantees
\begin{align*}
p_i[t+1] \leq \max \{p_i[t] - \frac{1}{V} p^{\max}, 0\}, \forall i\in\{1,2,\ldots,n\}.
\end{align*}
where $D_1, \ldots, D_n$ are constants defined in Assumption \ref{ass:basic}.
\end{Cor}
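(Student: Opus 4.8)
The plan is to reduce this corollary to the closed-form projection formula of Lemma \ref{lm:proj-sol}. First I would rewrite the power update \eqref{eq:power-update} in the form $\mathbf{p}[t+1] = \mbox{Proj}_{\mathcal{P}}\{\mathbf{p}[t] + \mathbf{b}\}$ by setting
\begin{align*}
\mathbf{b} = \frac{1}{V} \nabla_\mathbf{p} U(\mathbf{p}[t]; \omega[t]) + \frac{1}{V^2}Q[t]\mathbf{1},
\end{align*}
so that the $i$-th component is $b_i = \frac{1}{V}\frac{\partial}{\partial p_i} U(\mathbf{p}[t]; \omega[t]) + \frac{1}{V^2}Q[t]$. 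The whole argument hinges on showing that the hypothesis $Q[t] \leq -V(D^{\max} + p^{\max})$ is exactly strong enough to force $\mathbf{b} \leq \mathbf{0}$ component-wise, which is the precondition needed to apply Lemma \ref{lm:proj-sol}.

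The key estimate is a one-line bound on each $b_i$. By Assumption \ref{ass:basic} we have $\frac{\partial}{\partial p_i} U(\mathbf{p}[t]; \omega[t]) \leq D_i \leq D^{\max}$, so the learning term contributes at most $\frac{1}{V}D^{\max}$. Meanwhile the hypothesis on $Q[t]$ gives $\frac{1}{V^2}Q[t] \leq -\frac{1}{V}(D^{\max} + p^{\max})$. Adding these yields
\begin{align*}
b_i \leq \frac{1}{V}D^{\max} - \frac{1}{V}(D^{\max} + p^{\max}) = -\frac{1}{V}p^{\max} \leq 0,
\end{align*}
for every $i$, where the last inequality uses $p^{\max}>0$. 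Hence $\mathbf{b} \leq \mathbf{0}$, and Lemma \ref{lm:proj-sol} applies to give the explicit projection $p_i[t+1] = \max\{p_i[t] + b_i, 0\}$.

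The final step is purely monotonicity. Since $b_i \leq -\frac{1}{V}p^{\max}$, we have $p_i[t] + b_i \leq p_i[t] - \frac{1}{V}p^{\max}$, and because $x \mapsto \max\{x,0\}$ is nondecreasing, it follows that
\begin{align*}
p_i[t+1] = \max\{p_i[t] + b_i, 0\} \leq \max\Big\{p_i[t] - \tfrac{1}{V}p^{\max}, 0\Big\},
\end{align*}
which is the claimed bound. I do not expect any genuine obstacle here: the proof is a direct calculation, and the only thing requiring insight is already baked into the statement, namely recognizing that the threshold $-V(D^{\max}+p^{\max})$ for $Q[t]$ is precisely calibrated so that the nonpositive drift term $\frac{1}{V^2}Q[t]\mathbf{1}$ dominates the subgradient term $\frac{1}{V}\nabla_\mathbf{p} U$ and renders $\mathbf{b}$ nonpositive. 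The interpretive point worth noting afterward is that this corollary shows a sufficiently negative virtual queue $Q[t]$ (i.e., a sufficiently empty physical battery, via the shift relation \eqref{eq:E-Q-shift}) forces the algorithm to strictly decrease power usage, which is the mechanism that will later guarantee the energy availability constraint \eqref{eq:energy-availability}.
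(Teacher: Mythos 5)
Your proposal is correct and matches the paper's own proof essentially step for step: both define $\mathbf{b} = \frac{1}{V}\nabla_\mathbf{p} U(\mathbf{p}[t];\omega[t]) + \frac{1}{V^2}Q[t]\mathbf{1}$, use Assumption \ref{ass:basic} together with the hypothesis on $Q[t]$ to show $b_i \leq -\frac{1}{V}p^{\max} \leq 0$, and then invoke Lemma \ref{lm:proj-sol} plus monotonicity of $x \mapsto \max\{x,0\}$ to conclude. Your write-up is merely a bit more explicit about the intermediate arithmetic and the monotonicity step, which the paper compresses into a single displayed inequality.
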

\begin{proof}
Let $\mathbf{b} = \frac{1}{V} \nabla_\mathbf{p} U(\mathbf{p}[t]; \omega[t])  +\frac{1}{V^2}Q[t]\mathbf{1}$. Since $\frac{\partial}{\partial p_i} U(\mathbf{p}[t]; \omega[t])  \leq D_i, \forall i\in\{1,2,\ldots, n\}$ by Assumption \ref{ass:basic} and $Q[t] \leq - V(D^{\max} +p^{\max})$, we know $b_i \leq -\frac{1}{V} p^{\max}, \forall i\in\{1,2,\ldots, n\}$. By Lemma \ref{lm:proj-sol}, we have
\begin{align*}
p_i[t+1] =& \max\{p_i[t] +b_i, 0\}\\
\leq & \max\{p_i[t] - \frac{1}{V} p^{\max}, 0\}, \forall i\in\{1,2,\ldots, n\}.
\end{align*}
\end{proof}
By Corollary \ref{cor:large-Q-decrease-p}, if $Q[t]\leq - V(D^{\max} +p^{\max})$, then each component of $\mathbf{p}[t+1]$ decreases by $\frac{1}{V}p^{\max}$ until it hits $0$. That is, if $Q[t]\leq - V(D^{\max} +p^{\max})$ for sufficiently many slots, Algorithm \ref{alg:new-alg} eventually chooses $\mathbf{0}$ as the power decision.  By virtual queue update equation \eqref{eq:virtual-queue-Q-update}, $Q[t]$ decreases only when $\sum_{i=1}^{n} p_i[t]>0$.  These two observations suggest that $Q[t]$ yielded by Algorithm \ref{alg:new-alg} should be eventually bounded from below. This is formally summarized in the next theorem. 

\begin{Thm}\label{thm:Q-bound} Define positive constant $Q^{l}$, where superscript l denotes ``lower" bound, as
\begin{align}
Q^{l} =& \lceil V\rceil(D^{\max} + 2p^{\max} + e^{\max})\label{eq:Q-lower-bound}
\end{align}
where $V>0$ is the algorithm parameter, $\lceil V \rceil$ represents the smallest integer no less than $V$, $e^{\max}$ is the constant defined in Assumption \ref{ass:basic} and $D^{\max}$ is the constant defined in Corollary \ref{cor:large-Q-decrease-p}.  Algorithm \ref{alg:new-alg} guarantees 
\begin{align*}
Q[t] \geq -Q^{l}, \forall t\in\{0,1,2,\ldots\}.
\end{align*}
\end{Thm}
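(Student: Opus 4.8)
The plan is to bound $Q[t]$ from below by tracking the last slot before $t$ at which the virtual queue was still above the threshold $-V(D^{\max}+p^{\max})$ appearing in Corollary \ref{cor:large-Q-decrease-p}, and then controlling the total energy drained during the intervening ``deep'' excursion below that threshold.

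First I would dispose of the easy case: if $Q[t] > -V(D^{\max}+p^{\max})$, then since $V \le \lceil V\rceil$ we immediately get $Q[t] > -\lceil V\rceil(D^{\max}+p^{\max}) \ge -Q^{l}$, so the bound holds trivially. Hence I may assume $Q[t] \le -V(D^{\max}+p^{\max})$. Because $Q[0]=0$ lies above the threshold, I can define $t_0$ as the largest index in $\{0,\dots,t\}$ with $Q[t_0] > -V(D^{\max}+p^{\max})$; then $0\le t_0 \le t-1$ and $Q[s] \le -V(D^{\max}+p^{\max}) < 0$ for every $s \in \{t_0+1,\dots,t\}$. Since each such $Q[s]$ is strictly negative, the $\min$ in update \eqref{eq:virtual-queue-Q-update} is inactive, so $Q[s] = Q[s-1] + e[s] - \sum_{i} p_i[s]$ holds exactly on this range. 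Telescoping and using $e[s]\ge 0$ then gives $Q[t] \ge Q[t_0] - \sum_{s=t_0+1}^{t}\sum_{i} p_i[s]$.

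The heart of the argument is to show that the total power drained on the excursion, $\sum_{s=t_0+1}^{t}\sum_i p_i[s]$, is at most $\lceil V\rceil p^{\max}$ no matter how long the excursion lasts. Since $Q[s] \le -V(D^{\max}+p^{\max})$ on $\{t_0+1,\dots,t-1\}$, Corollary \ref{cor:large-Q-decrease-p} applies at each such slot and yields $p_i[s+1] \le \max\{p_i[s] - \tfrac{1}{V}p^{\max},0\}$. Iterating from $s=t_0+1$ gives $p_i[t_0+1+k] \le \max\{p_i[t_0+1] - \tfrac{k}{V}p^{\max},\,0\}$ for the relevant range of $k$. Because $p_i[t_0+1]\le p^{\max}$ (as $\mathbf{p}[t_0+1]\in\mathcal{P}$), this upper bound vanishes once $k \ge V$, hence once $k \ge \lceil V\rceil$; thus $p_i[s]=0$ for every slot $s \ge t_0+1+\lceil V\rceil$. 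Consequently at most $\lceil V\rceil$ of the slots in $\{t_0+1,\dots,t\}$ carry positive power, and each carries total power at most $p^{\max}$, giving $\sum_{s=t_0+1}^{t}\sum_i p_i[s] \le \lceil V\rceil p^{\max}$.

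Combining the two displays yields $Q[t] \ge Q[t_0] - \lceil V\rceil p^{\max} > -V(D^{\max}+p^{\max}) - \lceil V\rceil p^{\max}$, and the elementary comparison $V(D^{\max}+p^{\max}) + \lceil V\rceil p^{\max} \le \lceil V\rceil(D^{\max}+2p^{\max}) \le Q^{l}$ finishes the proof (the extra $\lceil V\rceil e^{\max}$ term in $Q^{l}$ is just slack). I expect the power-draining bound to be the main obstacle: the subtlety is that the excursion below the threshold may last arbitrarily many slots, so a naive per-slot decrease of $Q$ by up to $p^{\max}$ is useless; one must instead invoke Corollary \ref{cor:large-Q-decrease-p} to force the power to zero within $\lceil V\rceil$ slots, after which $Q$ can no longer decrease. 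Keeping the telescoping an exact identity (which hinges on $Q[s]<0$ throughout, so the $\min$ never clips) and correctly indexing the iterated decrease are the places where care is needed.
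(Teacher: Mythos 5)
Your proof is correct, but it follows a genuinely different route from the paper's. The paper argues by contradiction: it takes the \emph{first} slot $\tau > \lceil V\rceil$ with $Q[\tau] < -Q^{l}$, looks back exactly $\lceil V\rceil$ slots, and splits into two cases according to $Q[\tau-\lceil V\rceil]$. If $Q[\tau-\lceil V\rceil] \geq -\lceil V\rceil(D^{\max}+p^{\max}+e^{\max})$, the per-slot decrease bound of $p^{\max}$ makes $Q[\tau] < -Q^{l}$ impossible; otherwise, the per-slot \emph{increase} bound of $e^{\max}$ keeps the queue below the threshold $-V(D^{\max}+p^{\max})$ throughout the window, so Corollary \ref{cor:large-Q-decrease-p} forces $\mathbf{p}[\tau]=\mathbf{0}$ and hence $Q[\tau]\geq Q[\tau-1]$, contradicting the minimality of $\tau$. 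You instead give a direct argument: you track the last crossing $t_0$ of the threshold, observe that the $\min$ in \eqref{eq:virtual-queue-Q-update} never clips while the queue is strictly negative (so the dynamics telescope exactly), and bound the \emph{total} power drained over the entire excursion by $\lceil V\rceil p^{\max}$, since Corollary \ref{cor:large-Q-decrease-p} kills the power within $\lceil V\rceil$ slots. Both proofs use Corollary \ref{cor:large-Q-decrease-p} as the engine, but your bookkeeping buys something concrete: you obtain $Q[t] > -V(D^{\max}+p^{\max}) - \lceil V\rceil p^{\max} \geq -\lceil V\rceil(D^{\max}+2p^{\max})$, i.e., the $\lceil V\rceil e^{\max}$ term in $Q^{l}$ is pure slack, whereas the paper's windowed case analysis genuinely consumes it (to propagate ``deepness'' of the queue across the $\lceil V\rceil$-slot window against increases of up to $e^{\max}$ per slot). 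Your tighter constant would propagate to Theorem \ref{thm:energy-availability} and shave $\lceil V\rceil e^{\max}$ off the required battery capacity $E^{\max} \geq Q^{l}+p^{\max}$. The one place to be careful --- and you flagged it correctly --- is that the excursion can be arbitrarily long, so the argument must rest on the total-drain bound from Corollary \ref{cor:large-Q-decrease-p} rather than on per-slot decrease bounds; your indexing of the iterated decrease and the exactness of the telescoping (valid because $Q[s]<0$ throughout the excursion) are both sound.
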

\begin{proof}
By virtual queue update equation \eqref{eq:virtual-queue-Q-update}, we know $Q[t]$ can increase by at most $e^{\max}$ and can decrease by at most $p^{\max}$ on each slot. Since $Q[0] = 0$, we know $Q[t] \geq -Q^{l}$ for all $t\leq \lceil V \rceil$. We need to show $Q[t] \geq -Q^{l}$ for all $t>\lceil V \rceil$. This can be proven by contradiction as follows:

Assume $Q[t] < -Q^{l}$ for some $t>\lceil V \rceil$. Let $\tau > \lceil V \rceil$ be the \emph{first} (smallest) slot index when this happens. By the definition of $\tau$, we have  $Q[\tau] < -Q^{l}$ and 
\begin{align}
Q[\tau] < Q[\tau-1]. \label{eq:pf-Q-bound-eq1}
\end{align}
Now consider the value of $Q[\tau - \lceil V \rceil]$ in two cases (note that $\tau - \lceil V \rceil >0$). 
\begin{itemize}[leftmargin=10pt]
\item Case $Q[\tau-\lceil V \rceil] \geq -\lceil V \rceil(D^{\max}+p^{\max} + e^{\max})$: Since $Q[t]$ can decrease by at most $p^{\max}$ on each slot, we know $Q[\tau]\geq -\lceil V \rceil(D^{\max}+2p^{\max}+ e^{\max}) = -Q^{l}$. This contradicts the definition of $\tau$.
\item Case $Q[\tau-\lceil V \rceil] < -\lceil V \rceil(D^{\max}+p^{\max}+e^{\max})$: Since $Q[t]$ can increase by at most $e^{\max}$ on each slot, we know $Q[t] < -\lceil V \rceil(D^{\max}+p^{\max})$ for all $\tau-\lceil V \rceil\leq t\leq \tau-1$. By Corollary \ref{cor:large-Q-decrease-p}, for all $\tau-\lceil V \rceil\leq t\leq \tau-1$, we have 
\begin{align*}
p_i[t+1] \leq \max \{p_i[t] - \frac{1}{V} p^{\max}, 0\}, \forall i\in\{1,2,\ldots,n\}.
\end{align*}
Since the above inequality holds for all 
$ t\in\{\tau-\lceil V \rceil, \tau-\lceil V \rceil+1, \ldots,\tau-1\}$, 
and since at the start of this interval we trivially have $p_{i}[\tau-\lceil V \rceil] \leq p^{\max}, \forall i\in\{1,2,\ldots,n\}$, 
at each step of this interval each component of the power vector either hits zero or decreases by $\frac{1}{V} p^{\max}$, and so 
 after the $\lceil V \rceil$ steps of this interval 
 we have $p_{i}[\tau] = 0, \forall i\in\{1,2,\ldots,n\}$.  By  \eqref{eq:virtual-queue-Q-update}, we have 
\begin{align*}
Q[\tau] =& \min\{Q[\tau-1] + e[\tau] - \sum_{i=1}^{n} p_{i}[\tau], 0\} \\
=& \min\{Q[\tau-1] + e[\tau], 0\} \\
\geq& \min\{Q[\tau-1], 0\} \\
=& Q[\tau-1]
\end{align*}
 where the final equality holds because the queue is never positive (see \eqref{eq:virtual-queue-Q-update}). 
 This contradicts  \eqref{eq:pf-Q-bound-eq1}.
\end{itemize}
Both cases lead to contradictions. Thus, $Q[t] \geq -Q^{l}$ for all $t>\lceil V \rceil$.
\end{proof}

\subsection{Energy Availability Guarantee} \label{sec:energy-availability}
To implement the power decisions of Algorithm \ref{alg:new-alg} for the physical battery system $E[t]$ from equations
\eqref{eq:energy-availability}-\eqref{eq:energy-queue-dynamic}, 
we must ensure the energy availability constraint \eqref{eq:energy-availability} holds on each slot.  The next theorem shows that Algorithm \ref{alg:new-alg} ensures the constraint \eqref{eq:energy-availability} always holds as long as the battery capacity satisfies $E^{\max} \geq Q^l + p^{\max}$ and the initial energy satisfies $E[0] = E^{\max}$. It also explains that $Q[t]$ used in Algorithm \ref{alg:new-alg} is a shifted version of the physical battery backlog $E[t]$.

\begin{Thm} \label{thm:energy-availability}
If $E[0] = E^{\max} \geq Q^{l} + p^{\max} $, where $Q^{l}$ is the constant defined in Theorem \ref{thm:Q-bound}, then Algorithm \ref{alg:new-alg} ensures the energy availability constraint \eqref{eq:energy-availability} on each slot $t\in\{1,2,\ldots\}$. Moreover 
\begin{align}
E[t] = Q[t] + E^{\max}, \forall t\in\{0,1,2,\ldots\}. \label{eq:E-Q-shift}
\end{align}
\end{Thm}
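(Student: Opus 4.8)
The plan is to prove the shift identity \eqref{eq:E-Q-shift} by induction on $t$, and to extract the energy availability guarantee \eqref{eq:energy-availability} as a byproduct of the inductive step. The base case $t=0$ is immediate: since $Q[0]=0$ and $E[0]=E^{\max}$ by hypothesis, we have $E[0] = Q[0] + E^{\max}$.

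For the inductive step I would fix $t \geq 1$, assume $E[t-1] = Q[t-1] + E^{\max}$, and \emph{first} establish that the energy availability constraint \eqref{eq:energy-availability} holds at slot $t$. By Theorem \ref{thm:Q-bound} we have $Q[t-1] \geq -Q^{l}$, so the induction hypothesis gives $E[t-1] = Q[t-1] + E^{\max} \geq E^{\max} - Q^{l} \geq p^{\max}$, where the last step consumes the capacity assumption $E^{\max} \geq Q^{l} + p^{\max}$. Since $\mathbf{p}[t] \in \mathcal{P}$ forces $\sum_{i=1}^n p_i[t] \leq p^{\max}$, we obtain $\sum_{i=1}^n p_i[t] \leq p^{\max} \leq E[t-1]$, which is exactly \eqref{eq:energy-availability} at slot $t$.

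With energy availability confirmed, the battery dynamics \eqref{eq:energy-queue-dynamic} are valid at slot $t$, so I would substitute the induction hypothesis and factor $E^{\max}$ out of the minimum. Writing $y = Q[t-1] + e[t] - \sum_{i=1}^n p_i[t]$, the elementary identity $\min\{y + E^{\max}, E^{\max}\} = E^{\max} + \min\{y, 0\}$ turns \eqref{eq:energy-queue-dynamic} into $E[t] = E^{\max} + \min\{y, 0\}$, while the virtual queue update \eqref{eq:virtual-queue-Q-update} reads $Q[t] = \min\{y, 0\}$. Hence $E[t] = Q[t] + E^{\max}$, closing the induction and simultaneously yielding \eqref{eq:energy-availability} for every $t \geq 1$.

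I do not expect a genuine obstacle; the argument is essentially bookkeeping. The one point requiring care is the logical ordering within the inductive step: the dynamics \eqref{eq:energy-queue-dynamic} describe the true battery evolution only when \eqref{eq:energy-availability} holds, so availability at slot $t$ must be verified before \eqref{eq:energy-queue-dynamic} is invoked, and that verification is precisely where the lower bound $Q[t-1] \geq -Q^{l}$ from Theorem \ref{thm:Q-bound} and the capacity condition $E^{\max} \geq Q^{l} + p^{\max}$ are used. These two inequalities are calibrated exactly so that $E[t-1] \geq p^{\max}$, which is what makes any feasible power vector in $\mathcal{P}$ automatically drawable from the battery.
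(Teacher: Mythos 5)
Your proposal is correct and follows essentially the same route as the paper: induction on $t$ with the shift identity \eqref{eq:E-Q-shift} as the inductive invariant, using the lower bound $Q[t]\geq -Q^{l}$ from Theorem \ref{thm:Q-bound} together with the capacity condition $E^{\max}\geq Q^{l}+p^{\max}$ to secure $E[t-1]\geq p^{\max}\geq \sum_{i=1}^n p_i[t]$, and the queue updates \eqref{eq:virtual-queue-Q-update} and \eqref{eq:energy-queue-dynamic} (via the identity $\min\{y+E^{\max},E^{\max}\}=E^{\max}+\min\{y,0\}$) to propagate the shift. The only difference is cosmetic: you verify availability at the start of the inductive step while the paper verifies it at the end (for the next slot), which is the same argument with the index shifted by one.
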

\begin{proof}
Note that to show the energy availability constraint $\sum_{i=1}^n p_i[t]\leq E[t-1], \forall t\in\{1,2,\ldots\}$ is equivalent to show
\begin{align}
\sum_{i=1}^n p_i[t+1]\leq E[t], \forall t\in\{0, 1,2,\ldots\}. \label{eq:pf-thm-energy-availabity-eq1}
\end{align}
This lemma can be proven by inductions. 

Note that $E[0] = E^{\max}$ and  $Q[0] = 0$. It is immediate that \eqref{eq:E-Q-shift} holds for $t=0$.  Since $E[0] = E^{\max} \geq p^{\max}$ and $\sum_{i=1}^n p_i[1] \leq p^{\max}$, equation \eqref{eq:pf-thm-energy-availabity-eq1} also holds for $t=0$. Assume  \eqref{eq:pf-thm-energy-availabity-eq1} and \eqref{eq:E-Q-shift} hold for $t = t_0$ and consider $t = t_0+1$. By virtual queue dynamic \eqref{eq:virtual-queue-Q-update}, we have
\begin{align*}
Q[t_0+1] = \min\{Q[t_0]  + e[t_0] - \sum_{i=1}^n p_i[t_0], 0\}
\end{align*}
Adding $E^{\max}$ on both sides yields
\begin{align*}
&Q[t_0+1] + E^{\max} \\
=& \min\{Q[t_0]  + e[t_0+1] - \sum_{i=1}^n p_i[t_0+1] + E^{\max}, E^{\max}\}\\
\overset{(a)}{=}& \min\{E[t_0] + e[t_0+1] - \sum_{i=1}^n p_i[t_0+1], E^{\max}\}\\
\overset{(b)}{=} & E[t_0+1] 
\end{align*}
where (a) follows from the induction hypothesis $E[t_0] = Q[t_0] + E^{\max}$ and (b) follows from the energy queue dynamic \eqref{eq:energy-queue-dynamic}. Thus, \eqref{eq:E-Q-shift} holds for $t = t_0+1$.

Now observe
\begin{align*}
 E[t_0+1] &= Q[t_0+1] + E^{\max} \\
 &\overset{(a)}{\geq} E^{\max} - Q^{l} \\
 &\geq p^{\max} \\
 &\overset{(b)}{\geq} \sum_{i=1}^np_i[t_0+2]
 \end{align*}
 where (a) follows from the fact that $ Q[t] \geq -Q^{l}, \forall t\in\{0,1,2,\ldots\}$ by Theorem \ref{thm:Q-bound}; (b) holds since sum power is  never more than $p^{\max}$. Thus, \eqref{eq:pf-thm-energy-availabity-eq1} holds for $t=t_0+1$.

Thus, this theorem follows by induction.
\end{proof}

\subsection{Utility Optimality and Battery Capacity Tradeoff}

By Theorem \ref{thm:utility-performance}, Algorithm \ref{alg:new-alg} is guaranteed to attain a utility within an $O(1/V)$ distance to the optimal utility $U^{\ast}$. To obtain an $O(\epsilon)$-optimal utility, we can choose $V = 1/\epsilon $.  In this case, $Q^{l}$ defined in \eqref{eq:Q-lower-bound} is order $O(V)$. By Theorem \ref{thm:energy-availability},we need the battery capacity $E^{\max} \geq Q^{l} + p^{\max} = O(V) = O(1/\epsilon)$ to satisfy the energy availability constraint. Thus, there is a $[O(\epsilon), O(1/\epsilon)]$ tradeoff between the utility optimality and the required battery capacity.   On the other hand, if the battery capacity $E^{{\max}}$ is fixed and parameters $D^{\max}, e^{\max}$ in Assumption \ref{ass:basic} can be accurately estimated, our Theorems \ref{thm:Q-bound} and \ref{thm:energy-availability} together imply that Algorithm 1 ensures energy availability constraint \eqref{eq:energy-availability} by choosing $\lceil V\rceil < (E^{\max} - p^{\max})/(D^{\max} + 2p^{\max} + e^{\max})$.

\subsection{Extensions} \label{sec:large-delay}
Thus far, we have assumed that $\omega[t]$ is known with one slot delay, i.e., at the end of slot $t$, or equivalently, at the beginning of slot $t+1$. In fact, if $\omega(t)$ is observed with $t_0$ slot delay (at the end of slot $t+t_0-1$), we can modify Algorithm \ref{alg:new-alg} by initializing $\mathbf{p}[\tau]=\mathbf{0},\tau \in \{1,2,\ldots, t_0\}$ and updating $Q[t-t_0+1] = \min\{Q[t-t_0] + e[t-t_0+1]  -\sum_{i=1}^{n}p_{i}[t-t_0+1], 0\}$, $\mathbf{p}[t+1] = \mbox{Proj}_{\mathcal{P}}\{ \mathbf{p}[t-t_0+1]  + \frac{1}{V} \nabla_\mathbf{p} U(\mathbf{p}[t-t_0+1]; \omega[t-t_0+1])  +\frac{1}{V^2}Q[t-t_0+1] \mathbf{1}\}$ at the end of each slot $t\in\{t_0, t_0+1, \ldots\}$.  By extending the analysis in this section (from a $t_0=1$ version to a general $t_0$ version), a similar $[O(\epsilon), O(1/\epsilon)]$ tradeoff can be established.

\section{Performance in Non I.I.D Systems} \label{section:non-iid} 

Thus far, we have assumed the system state $\{\omega[t]\}_{t=1}^{\infty}$ evolves in an i.i.d. manner.  We now address the issue of non-i.i.d. behavior.  Unfortunately, a counter-example in \cite{Mannor09JMLR} shows that, even for a simpler scenario of constrained online convex optimization with one arbitrarily time-varying objective and one arbitrarily time-varying constraint, it is impossible for any algorithm to achieve regret-based guarantees similar to those of the unconstrained case.  Intuitively, this is because decisions that lead to good behavior for the objective function over one time interval may incur undesirable performance for the constraint function over a larger time interval.  However, below we show that significant analytical guarantees can still be achieved by allowing the $s[t]$ process to be an arbitrary and non-i.i.d. process, while maintaining the structured independence assumptions for the $e[t]$ process.\footnote{In fact, our assumptions on $e[t]$ in this section are slightly more general than the i.i.d. assumption of previous sections.} In this section, we consider a more general system model described as follows:

\begin{Model} [{\bf Non i.i.d. System State}] 
Consider a stochastic system state process  $\{\omega[t]\}_{t=1}^{\infty}$, where $\omega[t]=(e[t], \mathbf{s}[t]) \in \Omega$ for all $t$, 
satisfying the following conditions:
\begin{enumerate}
\item $\{\mathbf{s}[t]\}_{t=1}^{\infty}$ is an arbitrary process, possibly time-correlated and with different distributions at each $t$. 

\item $\{e[t]\}_{t=1}^{\infty}$ is a sequence of independent variables that take values over $[0, e^{\max}]$ and that have
the same expectation $\mathbb{E}[e[t]] = \bar{e}$ at each $t$. For each $t \in \{1, 2, 3, ...\}$, the value of $e[t+1]$ is independent of $\{(e[\tau], \mathbf{s}[t])\}_{\tau=1}^{t}$. 
\end{enumerate}
\end{Model}

Note that the above stochastic model includes the i.i.d. $\omega[t]$ model considered in the previous sections as a special case. Under this generalized stochastic system model, we compare the performance of Algorithm \ref{alg:new-alg} against any fixed power action vector $\mathbf{q}$ satisfying $\sum_{i=1}^{n} q_{i} \leq \bar{e}$.  Specifically, we will show that for any $\mathbf{q}\in \mathcal{P}$ with $\sum_{i=1}^{n} q_{i} \leq \bar{e}$, Algorithm \ref{alg:new-alg} with $V = \frac{1}{\epsilon}$ and $E[0] = E^{\max} = O(\frac{1}{\epsilon})$ ensures
\begin{align}
\frac{1}{t} \sum_{\tau=1}^{t} \mathbb{E}[U(\mathbf{p}[\tau]; \omega[\tau])] \geq \frac{1}{t} \sum_{\tau=1}^{t} \mathbb{E}[U(\mathbf{q}; \omega[\tau])] - O(\epsilon), \forall t \geq \frac{1}{\epsilon^{2}} \label{eq:online-ulility-perf}
\end{align}
and 
$\mathbf{p}[t]$ satisfies the energy availability constraint \eqref{eq:energy-availability} on each slot $t$. Note that if we fix a positive integer $t$ and choose $\mathbf{q} = \argmin_{\mathbf{p}\in \mathcal{P}: \sum_{i=1}^{n} p_{i} \leq \bar{e}} \sum_{\tau=1}^{t} U(\mathbf{p}; \omega[\tau])$, which is the best fixed decision of $t$ slots in hindsight, then \eqref{eq:online-ulility-perf} implies Algorithm \ref{alg:new-alg} has $O(\sqrt{T})$ regret in the terminology of online learning \cite{Zinkevich03ICML, YuNeely17NIPS}.

In fact, it is easy to verify that all lemmas and theorems except Lemma \ref{lm:dpp-bound} and Theorem \ref{thm:utility-performance} in Section \ref{sec:performance-analysis} are sample path arguments that 
hold for arbitrary $\omega[t]$ processes (even for those violating the ``Non i.i.d. System State'' model defined above). It follows that $\mathbf{p}[t]$ from Algorithm \ref{alg:new-alg} satisfies the energy availability constraint \eqref{eq:energy-availability} on all slots for arbitrary process $\omega[t]$. To prove \eqref{eq:online-ulility-perf}, we generalize Lemma \ref{lm:dpp-bound} under the generalized ``Non i.i.d. System State'' model.

\begin{Lem}\label{lm:non-iid-dpp-bound}
Let $\mathbf{q}\in \mathcal{P}$ be any fixed vector satisfying $\sum_{i=1}^{n} q_{i} \leq \bar{e}$. Under the Non i.i.d. System State model, at each iteration $t\in\{1,2,\ldots\}$, Algorithm \ref{alg:new-alg} guarantees 
\begin{align*}
&V\mathbb{E}[U(\mathbf{p}[t]; \omega[t])] - \Delta[t] \\ 
\geq &V\mathbb{E}[U(\mathbf{q}; \omega[t])]  +\frac{V^{2}}{2}\mathbb{E}[\Psi[t]] - \frac{D^{2} + B}{2}
\end{align*}
where $\Psi[t] =  \Vert \mathbf{q} - \mathbf{p}[t+1]\Vert^2- \Vert \mathbf{q} - \mathbf{p}[t]\Vert^2$, $D$ is the constant defined in Assumption \ref{ass:basic} and $B$ is the constant defined in Lemma \ref{lm:drift}.
\end{Lem}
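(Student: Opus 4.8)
The plan is to re-run the proof of Lemma~\ref{lm:dpp-bound} essentially verbatim, with the fixed comparison vector $\mathbf{q}$ playing the role of $\mathbf{p}^{\ast}$, and to isolate the single place where the i.i.d.\ hypothesis was actually used so that it can be replaced by the weaker structured independence of the Non i.i.d.\ System State model. The key observation is that the entire chain of inequalities in Lemma~\ref{lm:dpp-bound} up through \eqref{eq:pf-lm-dpp-eq7} is a deterministic \emph{sample-path} argument: it uses only (i) the strong concavity of the quadratic in \eqref{eq:pf-lm-inequality-from-decision-eq1} together with Lemma~\ref{lm:strong-convex-quadratic-optimality} applied at the maximizer $\mathbf{p}[t+1]$ and at the feasible point $\mathbf{q}\in\mathcal{P}$, (ii) the subgradient inequality $U(\mathbf{p}[t];\omega[t]) + (\nabla_\mathbf{p} U(\mathbf{p}[t];\omega[t]))\tran(\mathbf{q}-\mathbf{p}[t]) \geq U(\mathbf{q};\omega[t])$ from concavity of $U(\cdot;\omega[t])$, (iii) the gradient bound $\norm{\nabla_\mathbf{p} U}\leq D$ from Assumption~\ref{ass:basic} via the splitting \eqref{eq:pf-lm-dpp-eq2}, and (iv) the drift bound \eqref{eq:pf-lm-dpp-eq4} from Lemma~\ref{lm:drift}. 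None of these reference the distribution of $\omega[t]$, so they transcribe unchanged with $\mathbf{q}$ in place of $\mathbf{p}^{\ast}$ and $\Psi[t]$ in place of $\Phi[t]$.

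Carrying out those steps, I would obtain the exact analogue of \eqref{eq:pf-lm-dpp-eq7}, namely the sample-path inequality
\begin{align*}
VU(\mathbf{p}[t];\omega[t]) - \Delta[t] &\geq VU(\mathbf{q};\omega[t]) + Q[t]\Big(\sum_{i=1}^{n}q_i - e[t+1]\Big) \\ &\quad + \frac{V^{2}}{2}\Psi[t] - \frac{D^{2}+B}{2}.
\end{align*}
Taking expectations then reduces the lemma to establishing $\mathbb{E}\big[Q[t](\sum_{i=1}^{n}q_i - e[t+1])\big]\geq 0$. This is precisely the one step where \eqref{eq:pf-lm-dpp-eq8} previously invoked the full i.i.d.\ assumption, and it is therefore the only part that must be reworked. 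Note that, since $\mathbf{q}$ is a fixed vector, the term $\mathbb{E}[U(\mathbf{q};\omega[t])]$ is simply left as is; no analogue of the identity $\mathbb{E}[U(\mathbf{p}^{\ast};\omega[t])]=U^{\ast}$ is needed.

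Here the structured independence of the model does exactly the required work. By the virtual queue update \eqref{eq:virtual-queue-Q-update}, $Q[t]$ is a deterministic function of $\{(e[\tau],\mathbf{p}[\tau])\}_{\tau=1}^{t}$, and the power actions $\mathbf{p}[\tau]$ depend only on states $\omega[\tau-1]$; hence $Q[t]$ is measurable with respect to the history $\{(e[\tau],\mathbf{s}[\tau])\}_{\tau=1}^{t}$. Condition~2 of the Non i.i.d.\ System State model states that $e[t+1]$ is independent of exactly this history, so $Q[t]$ and $e[t+1]$ are independent. Consequently
$$\mathbb{E}\Big[Q[t]\Big(\sum_{i=1}^{n}q_i - e[t+1]\Big)\Big] = \mathbb{E}[Q[t]]\Big(\sum_{i=1}^{n}q_i - \bar{e}\Big) \geq 0,$$
using $\mathbb{E}[e[t+1]]=\bar{e}$, the standing fact $Q[t]\leq 0$ (so $\mathbb{E}[Q[t]]\leq 0$), and the hypothesis $\sum_{i=1}^{n}q_i\leq\bar{e}$. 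Substituting this back yields the claimed bound.

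The main obstacle is conceptual rather than computational: I must confirm that the full i.i.d.\ property is never silently used elsewhere in Lemma~\ref{lm:dpp-bound}, so that the sole distributional input is the factorization above. The delicate point is matching the measurability of $Q[t]$ against the precise independence clause of condition~2, observing that the $\mathbf{s}[\tau]$ process may be arbitrarily time-correlated and that only the independence of the \emph{energy} increment $e[t+1]$ from the past, together with its constant mean $\bar{e}$, is actually needed. Everything else is a direct transcription of the earlier proof.
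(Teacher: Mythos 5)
Your proposal is correct and follows essentially the same route as the paper's own proof: re-run the argument of Lemma~\ref{lm:dpp-bound} verbatim with $\mathbf{q}$ in place of $\mathbf{p}^{\ast}$ and $\Psi[t]$ in place of $\Phi[t]$ up to the analogue of \eqref{eq:pf-lm-dpp-eq7}, then replace the i.i.d.\ step \eqref{eq:pf-lm-dpp-eq8} by the model's condition that $e[t+1]$ is independent of the history determining $Q[t]$ and has mean $\bar{e}$, so that $\mathbb{E}[Q[t](\sum_{i=1}^n q_i - e[t+1])] = \mathbb{E}[Q[t]](\sum_{i=1}^n q_i - \bar{e}) \geq 0$. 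Your explicit measurability justification for the independence of $Q[t]$ and $e[t+1]$ is in fact slightly more careful than the paper's one-line remark.
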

\begin{proof}
The proof is almost identical to the proof of Lemma \ref{lm:dpp-bound} until \eqref{eq:pf-lm-dpp-eq7}.  Fix $t\in\{1,2,\ldots\}$. As observed in the proof of Lemma \ref{lm:dpp-bound}, the expression \eqref{eq:pf-lm-inequality-from-decision-eq1} is strongly concave with respect to $\mathbf{p}\in \mathcal{P}$ with modulus $V^{2}$.  Since $\mathbf{q} \in \mathcal{P}$ and $\mathbf{p}[t+1]$ is chosen to maximize \eqref{eq:pf-lm-inequality-from-decision-eq1} over all $\mathbf{p}\in \mathcal{P}$, by Lemma \ref{lm:strong-convex-quadratic-optimality}, we have
\begin{align*}
&V \big(\nabla_\mathbf{p} U(\mathbf{p}[t]; \omega[t])\big)\tran (\mathbf{p}[t+1] - \mathbf{p}[t]) +Q[t]\sum_{i=1}^{n}p_{i}[t+1] \\&- \frac{V^{2}}{2} \Vert \mathbf{p}[t+1] - \mathbf{p}[t]\Vert^2 \\
\geq & V\big(\nabla_\mathbf{p} U(\mathbf{p}[t]; \omega[t])\big)\tran (\mathbf{q} - \mathbf{p}[t]) + Q[t]\sum_{i=1}^{n}q_{i} \nonumber \\&  - \frac{V^{2}}{2} \Vert \mathbf{q}- \mathbf{p}[t]\Vert^2 + \frac{V^{2}}{2} \Vert\mathbf{q} - \mathbf{p}[t+1]\Vert^2 \\
= &V\big(\nabla_\mathbf{p} U(\mathbf{p}[t]; \omega[t])\big)\tran (\mathbf{q} - \mathbf{p}[t]) + Q[t]\sum_{i=1}^{n}q_{i} + \frac{V^{2}}{2} \Psi[t]. 
\end{align*}
Subtracting $Q[t]e[t+1]$ from both sides and rearranging terms yields 
\begin{align*}
&V \big(\nabla_{\mathbf{p}}U(\mathbf{p}[t]; \omega[t])
\big)\tran(\mathbf{p}[t+1] - \mathbf{p}[t]) \\&+ Q[t](\sum_{i=1}^{n}p_{i}[t+1] -e[t+1]) \\
\geq&  V \big(\nabla_{\mathbf{p}}U(\mathbf{p}[t]; \omega[t]) \big)\tran(\mathbf{q} - \mathbf{p}[t]) + Q[t](\sum_{i=1}^{n}q_{i} -e[t+1])  \\&+\frac{V^{2}}{2} \Psi[t]+ \frac{V^{2}}{2} \Vert\mathbf{p}[t+1] - \mathbf{p}[t]\Vert^{2}.
\end{align*}
Adding $VU(\mathbf{p}[t]; \omega[t])$ to both sides and noting that $ U(\mathbf{p}[t]; \omega[t]) + (\nabla_\mathbf{p} U(\mathbf{p}[t]; \omega[t]))\tran (\mathbf{q} - \mathbf{p}[t]) \geq U(\mathbf{q}; \omega[t])$ by the concavity of $U(\mathbf{p}; \omega [t])$ yields
\begin{align*}
&VU(\mathbf{p}[t]; \omega[t]) + V \big(\nabla_\mathbf{p} U(\mathbf{p}[t]; \omega[t])\big)\tran (\mathbf{p}[t+1] - \mathbf{p}[t]) \\ &+ Q[t](\sum_{i=1}^{n}p_{i}[t+1]-e[t+1]) \\
\geq &  VU(\mathbf{q}; \omega[t]) + Q[t](\sum_{i=1}^{n}q_{i} -e[t+1])  +\frac{V^{2}}{2}\Psi[t]\\&+ \frac{V^{2}}{2} \Vert \mathbf{p}[t+1] - \mathbf{p}[t]\Vert^{2}. 
\end{align*}
Rearranging terms yields
\begin{align}
&VU(\mathbf{p}[t]; \omega[t]) + Q[t](\sum_{i=1}^{n}p_{i}[t+1] -e[t+1]) \nonumber\\
\geq &  VU(\mathbf{q}; \omega[t]) + Q[t](\sum_{i=1}^{n}q_i -e[t+1])+\frac{V^{2}}{2}\Phi[t]  \nonumber\\&+ \frac{V^{2}}{2} \Vert \mathbf{p}[t+1] - \mathbf{p}[t]\Vert^{2} \nonumber \\ &-V \big(\nabla_\mathbf{p} U(\mathbf{p}[t]; \omega[t])\big)\tran (\mathbf{p}[t+1] - \mathbf{p}[t])  \label{eq:pf-lm-non-iid-dpp-eq1}
\end{align}
Note that 
\begin{align}
&V \big( \nabla_\mathbf{p} U(\mathbf{p}[t]; \omega[t])\big)\tran (\mathbf{p}[t+1] - \mathbf{p}[t]) \nonumber \\
\overset{(a)}{\leq}&  \frac{1}{2} \Vert \nabla_\mathbf{p} U(\mathbf{p}[t]; \omega[t]) \Vert^{2}  + \frac{V^{2}}{2} \Vert \mathbf{p}[t+1] - \mathbf{p}[t]\Vert^{2} \nonumber \\
\overset{(b)}{\leq}& \frac{1}{2}D^{2} + \frac{V^{2}}{2} \Vert \mathbf{p}[t+1] - \mathbf{p}[t]\Vert^{2} \label{eq:pf-lm-non-iid-dpp-eq2}
\end{align}
where (a) follows by using basic inequality $\mathbf{x}\tran\mathbf{y} \leq \frac{1}{2}\Vert \mathbf{x}\Vert^{2} + \frac{1}{2}\Vert \mathbf{y}\Vert^{2}$ for all $\mathbf{x}, \mathbf{y}\in \mathbb{R}^{n}$ with $\mathbf{x} = \nabla_\mathbf{p} U(\mathbf{p}[t]; \omega[t])$ and $\mathbf{y} = V(\mathbf{p}[t+1] - \mathbf{p}[t])$; and (b) follows from Assumption \ref{ass:basic}.
Substituting \eqref{eq:pf-lm-non-iid-dpp-eq2} into \eqref{eq:pf-lm-non-iid-dpp-eq1} yields
\begin{align}
&VU(\mathbf{p}[t]; \omega[t]) + Q[t](\sum_{i=1}^{n}p_{i}[t+1] -e[t+1]) \nonumber\\
\geq &  VU(\mathbf{q}; \omega[t]) + Q[t](\sum_{i=1}^{n}q_{i}-e[t+1])+\frac{V^{2}}{2}\Psi[t]-\frac{1}{2}D^{2} \label{eq:pf-lm-non-iid-dpp-eq3}
\end{align}
By Lemma \ref{lm:drift}, we have 
\begin{align}
-\Delta[t] \geq Q[t] (\sum_{i=1}^{n}p_{i}[t+1]  - e[t+1]) -\frac{B}{2}  \label{eq:pf-lm-non-iid-dpp-eq4}
\end{align}
Summing \eqref{eq:pf-lm-non-iid-dpp-eq3} and \eqref{eq:pf-lm-non-iid-dpp-eq4}; and cancelling common terms on both sides yields
\begin{align}
&VU(\mathbf{p}[t]; \omega[t]) -\Delta[t] \nonumber \\
\geq&  VU(\mathbf{q}; \omega[t]) + Q[t](\sum_{i=1}^{n}q_{i}- e[t+1]) +\frac{V^{2}}{2} \Psi[t] \nonumber \\ &-\frac{D^{2} +B}{2} \label{eq:pf-lm-non-iid-dpp-eq7}
\end{align}
Note that each $Q[t]$ (depending only on $e[\tau], p[\tau]$ with $\tau\in\{1,2,\ldots,t\}$) is independent of $e[t+1]$ by our ``Non I.I.D System State'' model. Thus,
\begin{align}
&\mathbb{E}\left[Q[t](\sum_{i=1}^{n}q_{i}- e[t+1])\right] \nonumber\\
=& \mathbb{E}[Q[t]]\mathbb{E}[\sum_{i=1}^{n}q_{i} - e[t+1]] \nonumber\\
\overset{(a)}{\geq}& 0 \label{eq:pf-lm-non-iid-dpp-eq8}
\end{align}
where (a) follows because $Q[t]\leq 0$ and $\sum_{i=1}^{n}q_{i} \leq \bar{e} = \mathbb{E}[e[t+1]], \forall t$ where the last step follows from our ``Non I.I.D System State'' model. 

Taking expectations on both sides of \eqref{eq:pf-lm-non-iid-dpp-eq7} and using \eqref{eq:pf-lm-non-iid-dpp-eq8}  yields the desired result.
\end{proof}

Now \eqref{eq:online-ulility-perf} follows directly from Lemma \ref{lm:non-iid-dpp-bound} and is summarized in the next theorem.

\begin{Thm} \label{thm:non-iid-utility-performance}
Let $\mathbf{q}\in \mathcal{P}$ be any fixed vector satisfying $\sum_{i=1}^{n} q_{i} \leq \bar{e}$.  Under the Non i.i.d. System State model, for all $t\in\{1,2,\ldots\}$, Algorithm \ref{alg:new-alg} guarantees
\begin{align}
&\frac{1}{t}\sum_{\tau=1}^{t} \mathbb{E}[U(\mathbf{p}[\tau]; \omega[\tau])] \nonumber \\
\geq& \frac{1}{t} \sum_{\tau=1}^{t} \mathbb{E}[U(\mathbf{q}; \omega[\tau])]  - \frac{V(p^{\max})^{2}}{2t} -\frac{B}{2Vt} -  \frac{D^{2} + B}{2V} \label{eq:thm-non-iid-utility-performance-eq1}
\end{align}
where  $D$ is the constant defined in Assumption \ref{ass:basic} and $B$ is the constant defined in Lemma \ref{lm:drift}. In particular, if we choose $V = \frac{1}{\epsilon}$, then 
\begin{align*}
\frac{1}{t} \sum_{\tau=1}^{t} \mathbb{E}[U(\mathbf{p}[\tau]; \omega[\tau])] \geq \frac{1}{t} \sum_{\tau=1}^{t} \mathbb{E}[U(\mathbf{q}; \omega[\tau])] - O(\epsilon), \forall t \geq \frac{1}{\epsilon^{2}}
\end{align*}
\end{Thm}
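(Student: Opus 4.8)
The plan is to obtain the theorem as an immediate consequence of Lemma~\ref{lm:non-iid-dpp-bound}, reusing verbatim the summing-and-telescoping argument from the proof of Theorem~\ref{thm:utility-performance}, except that the per-slot comparison is now against the fixed vector $\mathbf{q}$ evaluated on the realized states rather than against the constant $U^\ast$. First I would fix $t\in\{1,2,\ldots\}$ and, for each $\tau\in\{1,\ldots,t\}$, invoke Lemma~\ref{lm:non-iid-dpp-bound} to write
\begin{align*}
V\mathbb{E}[U(\mathbf{p}[\tau];\omega[\tau])] - \mathbb{E}[\Delta[\tau]] \geq V\mathbb{E}[U(\mathbf{q};\omega[\tau])] + \frac{V^{2}}{2}\mathbb{E}[\Psi[\tau]] - \frac{D^{2}+B}{2}.
\end{align*}

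Next I would sum this inequality over $\tau=1,\ldots,t$, divide both sides by $Vt$, and rearrange. The key simplification is that $\Psi[\tau]$ and $\Delta[\tau]$ are both first differences and hence telescope: recalling $\Psi[\tau]=\Vert\mathbf{q}-\mathbf{p}[\tau+1]\Vert^2-\Vert\mathbf{q}-\mathbf{p}[\tau]\Vert^2$ and $\Delta[\tau]=\tfrac12(Q[\tau+1])^2-\tfrac12(Q[\tau])^2$, the partial sums collapse to $\mathbb{E}[\Vert\mathbf{q}-\mathbf{p}[t+1]\Vert^2-\Vert\mathbf{q}-\mathbf{p}[1]\Vert^2]$ and $\tfrac12\mathbb{E}[(Q[t+1])^2-(Q[1])^2]$ respectively. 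I would then drop the two nonnegative terminal terms $\frac{V}{2t}\mathbb{E}[\Vert\mathbf{q}-\mathbf{p}[t+1]\Vert^2]$ and $\frac{1}{2Vt}\mathbb{E}[(Q[t+1])^2]$, and bound the initial terms exactly as in the i.i.d. proof: since $\mathbf{p}[1]=\mathbf{0}$ and $\mathbf{q}\in\mathcal{P}$ we have $\Vert\mathbf{q}-\mathbf{p}[1]\Vert=\Vert\mathbf{q}\Vert\leq\sum_{i=1}^{n}q_i\leq p^{\max}$, and since $Q[0]=0$ we have $|Q[1]|\leq\max\{e^{\max},p^{\max}\}=\sqrt{B}$. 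This produces \eqref{eq:thm-non-iid-utility-performance-eq1}.

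For the final claim I would substitute $V=1/\epsilon$ into \eqref{eq:thm-non-iid-utility-performance-eq1}. Since the first two error terms $\frac{V(p^{\max})^2}{2t}$ and $\frac{B}{2Vt}$ are decreasing in $t$, evaluating them at the threshold $t=1/\epsilon^{2}$ gives the worst case over all $t\geq 1/\epsilon^{2}$; collecting the resulting $O(\epsilon)$ terms yields the stated corollary for all $t\geq 1/\epsilon^{2}$.

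The computation above is entirely routine; the substantive work has already been discharged inside Lemma~\ref{lm:non-iid-dpp-bound}, which is where the only genuine obstacle lies. The delicate point there is step~\eqref{eq:pf-lm-non-iid-dpp-eq8}, which needs $Q[t]$ (a function of $e[\tau],\mathbf{p}[\tau]$ for $\tau\leq t$) to be independent of $e[t+1]$ together with $\mathbb{E}[e[t+1]]=\bar{e}\geq\sum_{i=1}^{n}q_i$; these are precisely the structural conditions imposed on the $e[t]$ process by the Non i.i.d. System State model. Unlike the i.i.d. case, I cannot appeal to Lemma~\ref{lm:utility-upper-bound} (which relied on Jensen's inequality and a single stationary $h$); instead the benchmark is the fixed action $\mathbf{q}$ compared slot-by-slot against $U(\mathbf{q};\omega[\tau])$ on the actual $\mathbf{s}[\tau]$ realizations, and it is exactly this sample-path comparison that lets $\{\mathbf{s}[t]\}$ remain arbitrary and time-correlated while the energy-balance argument still goes through on the $e[t]$ coordinate.
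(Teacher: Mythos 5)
Your proposal is correct and follows essentially the same route as the paper: the paper's proof of Theorem \ref{thm:non-iid-utility-performance} simply states that one sums the per-slot bound of Lemma \ref{lm:non-iid-dpp-bound} over $t$ slots and simplifies the telescoping sums, exactly mirroring the proof of Theorem \ref{thm:utility-performance}, which is precisely what you carry out (including the correct bounds $\Vert\mathbf{q}-\mathbf{p}[1]\Vert\leq p^{\max}$ and $|Q[1]|\leq\sqrt{B}$, and the substitution $V=1/\epsilon$, $t\geq 1/\epsilon^{2}$). Your closing remarks on why Lemma \ref{lm:non-iid-dpp-bound} rather than Lemma \ref{lm:utility-upper-bound} carries the burden in the non-i.i.d. setting also accurately reflect the paper's reasoning.
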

\begin{proof}
The proof is similar to that for Theorem \ref{thm:utility-performance} and follows by summing the expression from Lemma \ref{lm:non-iid-dpp-bound} over $t$ slots and simplifying the telescoping sums.
\end{proof}

Theorem \ref{thm:non-iid-utility-performance} shows the algorithm achieves an $O(\epsilon)$ approximation when compared against any fixed power action policy that satisfies $\sum_{i=1}^n q_i \leq \overline{e}$, with convergence time $O(1/\epsilon^2)$. 
This asymptotic convergence time cannot be improved even in the  special case when $e[t]=e^{max}$ for all $t$.  Indeed, this special case removes the energy availability constraint and reduces to an (unconstrained)  
online convex optimization problem for which it is known that $O(1/\epsilon^2)$ convergence time is optimal  (see the central limit theorem argument in \cite{Hazan07ML}).

\section{Numerical Experiments}
In this section, we consider an energy harvesting wireless device transmitting over  $2$ subbands whose channel strength is represented by $s_1[t]$ and $s_2[t]$, respectively. Let $\mathcal{P} = \{\mathbf{p}: p_1 +p_2 \leq 5, p_1\geq 0, p_2\geq 0\}$. Our goal is to decide the power action $\mathbf{p}[t]$ to maximize the long term utility/throughput $\lim\limits_{T\rightarrow \infty}\frac{1}{T}\sum_{t=1}^{{T}} U(\mathbf{p}[t]; \mathbf{s}[t])$ with $U(\mathbf{p}; \mathbf{s})$ given by \eqref{eq:log-utility}.

\subsection{Scenarios with i.i.d. System States}

In this subsection, we consider system states $\omega[t] = (e[t], \mathbf{s}[t])$ that are i.i.d. generated on each slot. Let harvested energy $e[t]$ satisfy the uniform distribution over interval $[0,3]$. Let both subbands be Rayleigh fading channels where $s_1[t]$ follows the Rayleigh distribution with parameter $\sigma = 0.5$ truncated in the range $[0,4]$ and $s_2 [t]$ follows the Rayleigh distribution with parameter $\sigma = 1$ truncated in the range $[0,4]$.  

\subsubsection{Performance Verification}
By assuming the perfect knowledge of distributions, we solve the deterministic problem  \eqref{eq:deterministic-obj}-\eqref{eq:deterministic-box-cons} and obtain $U^\ast = 1.0391$. To verify the performance proven in Theorems \ref{thm:utility-performance} and \ref{thm:energy-availability}, we run Algorithm \ref{alg:new-alg} with $V\in\{5, 10, 20, 40\}$ and $E[0] = E^{\max} = Q^l+p^{\max}$. All figures in this paper are obtained by averaging $200$ independent simulation runs.  In all the simulation runs, the power actions yielded by Algorithm \ref{alg:new-alg} always satisfy the energy availability constraints. We also plot the averaged utility performance in Figure \ref{fig:large_battery}, where the $y$-axis is the running average of expected utility. Figure \ref{fig:large_battery} shows that the utility performance can approach $U^\ast$ by using larger $V$ parameter.

\begin{figure}[htbp]
\centering
   \includegraphics[width=0.465\textwidth,height=0.465\textheight,keepaspectratio=true]{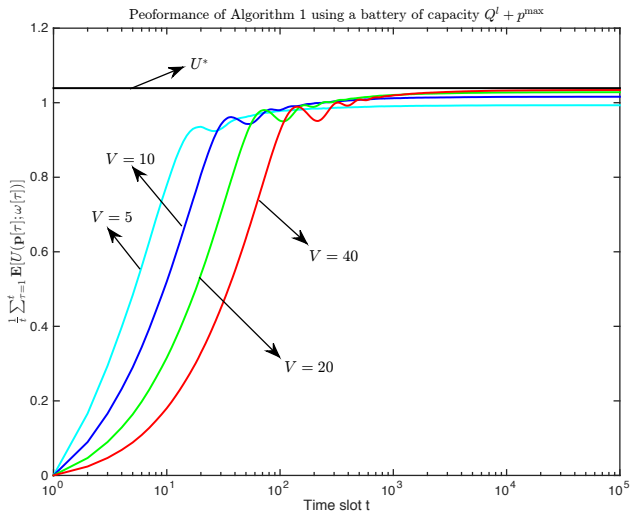} % requires the graphicx package
   \caption{Utility performance of Algorithm \ref{alg:new-alg} with $E[0] = E^{\max} = Q^l+p^{\max}$ for different $V$ in an i.i.d. system.}
   \label{fig:large_battery}
\end{figure}

\subsubsection{Performance with small battery capacity} 
In practice, it is possible that for a given $V$, the battery capacity $E^{\max} = Q^l+p^{\max}$ required in Theorem  \ref{thm:energy-availability} is too large. If we run Algorithm \ref{alg:new-alg} with small capacity batteries such that $\sum_{i=1}^n p_i[t+1]\geq E[t]$ for certain slot $t$, a reasonable choice is to scale down $\mathbf{p}[t+1]$ by \eqref{eq:explicit-energy-availability} and use $\hat{\mathbf{p}}[t+1]$ as the power action. Now, we run simulations by fixing $V = 40$ in Algorithm \ref{alg:new-alg} and test its performance with small capacity batteries.  By Theorem  \ref{thm:energy-availability}, the required battery capacity to ensure energy availability is $E^{\max} = 685$. In our simulations, we choose small $E^{\max} \in \{10, 20, 50\}$ and $E[0] = 0$, i.e., the battery is initially empty. If $\mathbf{p}[t+1]$ from Algorithm \ref{alg:new-alg} violates energy availability constraint \eqref{eq:energy-availability}, we use $\hat{\mathbf{p}}[t+1]$ from \eqref{eq:explicit-energy-availability} as the true power action that is enforced to satisfy \eqref{eq:energy-availability} and update the energy backlog by $E[t+1] = \min\{E[t] - \sum_{i=1}^n \hat{p}_i[t+1] + e[t+1], E^{\max}\}$. Figure \ref{fig:small_battery} plots the utility performance of Algorithm \ref{alg:new-alg} in this practical scenario and shows that even with small capacity batteries, Algorithm  \ref{alg:new-alg} still achieves a utility close to $U^\ast$. This further demonstrates the superior performance of our algorithm.  

\begin{figure}[htbp]
\centering
   \includegraphics[width=0.465\textwidth,height=0.465\textheight,keepaspectratio=true]{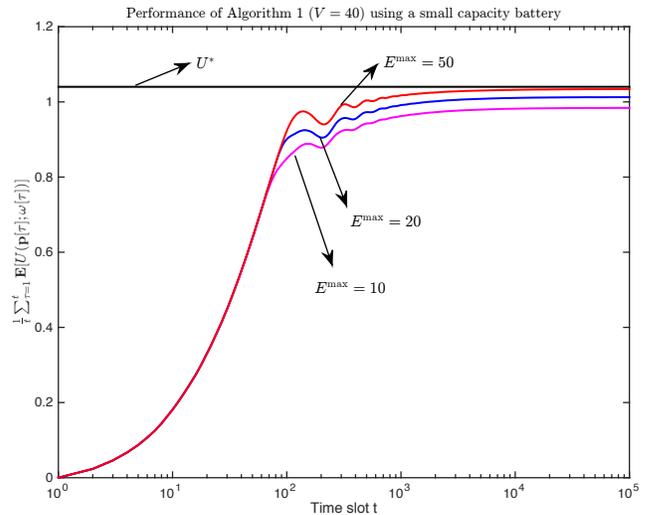} % requires the graphicx package
   \caption{Utility performance of Algorithm \ref{alg:new-alg} with $V=40$ for different $E^{\max}$ in an i.i.d. system.}
   \label{fig:small_battery}
\end{figure}

\subsubsection{Longer System State Observation Delay} Now consider the situation where the system state $\omega[t] = (e[t], \mathbf{s}[t])$ is observed with $t_{0}>1$ slot delay.  As discussed in Section \ref{sec:large-delay},  $t_{0}>1$ does not affect the $[O(\epsilon), O(1/\epsilon)]$ tradeoff established in Theorems \ref{thm:utility-performance} and \ref{thm:energy-availability}. We now run simulations to verify the effect of observation delay $t_{0}$ for our algorithm's performance. We set the battery capacity $E^{\max}=20$,  $E[0]=0$ and run Algorithm \ref{alg:new-alg} with $V=40$ using the modified updates described in Section \ref{sec:large-delay} with $t_{0}\in\{1,5,10\}$. Note that if the yielded power vector at one slot uses more than available energy in the battery, we also need to scale it down using \eqref{eq:explicit-energy-availability}. Figure \ref{fig:delay} plots the utility performance of Algorithm \ref{alg:new-alg} with different system state observation delay. As observed in the figure, a larger $t_{0}$ can slow down the convergence of our algorithm but has a negligible effect on the long term performance.
\begin{figure}[htbp]
\centering
   \includegraphics[width=0.465\textwidth,height=0.465\textheight,keepaspectratio=true]{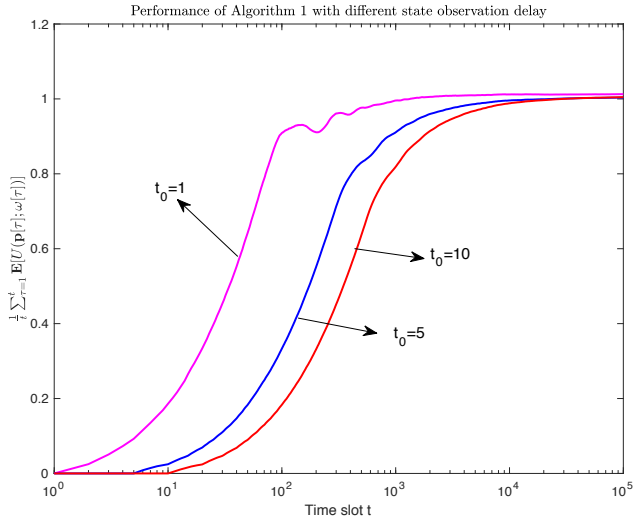} % requires the graphicx package
   \caption{Utility performance  of Algorithm \ref{alg:new-alg} for different system state observation delay  in an i.i.d. system.}
   \label{fig:delay}
\end{figure}

\subsubsection{Comparison with Other Schemes}

As reviewed in Section \ref{sec:alg-intuition}, the conventional Zinkevich's online convex learning \eqref{eq:Zinkevich} with $\gamma =1/V$ is similar to the power control step \eqref{eq:power-update} in Algorithm \ref{alg:new-alg} except that term $\frac{1}{V^{2}}Q[t]\mathbf{1}$ is dropped. The weakness of \eqref{eq:Zinkevich} is that its yielded power actions can violate the energy availability constraint \eqref{eq:energy-availability}. Now consider the scheme that yields power actions as follows:
\begin{align}
\mathbf{p}[t+1] = \mbox{Proj}_{\mathcal{P}(t)} \{ \mathbf{p}[t] + \gamma \nabla_\mathbf{p} U(\mathbf{p}[t]; \omega[t]) \}
\end{align}
with $\mathcal{P}(t) = \{\mathbf{p}\in \mathbb{R}^n: \sum_{i=1}^n p_i \leq  \min\{p^{\max}, E[t]\}, p_i \geq 0, \forall i\in
\{1,2,\ldots,n\}\}$.  This scheme is a simple modification of Zinkevich's online convex learning \eqref{eq:Zinkevich} to ensure \eqref{eq:energy-availability} by projecting onto time-varying sets $\mathcal{P}(t)$ that restrict the total used power to be no more than current energy buffer $E[t]$. We call this scheme Baseline 1.  We also consider another scheme that chooses $\mathbf{p}[t+1]$ as the solution to the following optimization:
\begin{align*}
\mathbf{p}[t+1] = \mbox{argmax}_{\mathcal{P}(t)} \{ U(\mathbf{p}; \omega[t])\}
\end{align*} 
Similar to Baseline 1, this scheme can ensure the energy availability by restricting its power  vector to the time-varying sets $\mathcal{P}(t)$.  Different from Baseline 1, power vector $\mathbf{p}[t+1]$ is chosen to maximize an outdated utility $U(\mathbf{p}; \omega[t])$. We call this scheme Baseline 2.    We set the battery capacity $E^{{\max}}=10$,  $E[0]=0$; and run Algorithm \ref{alg:new-alg} with $V=50$ (with power vector scaled down when necessary), Baseline 1 with $\gamma =1/V$ and Baseline 2. Figure \ref{fig:compare} plots the utility performance of all schemes and demonstrates that Algorithm \ref{alg:new-alg} has the best performance.
\begin{figure}[htbp]
\centering
   \includegraphics[width=0.465\textwidth,height=0.465\textheight,keepaspectratio=true]{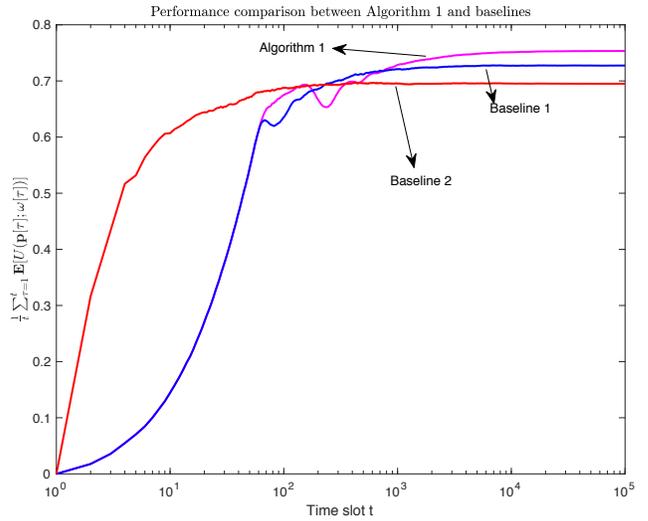} % requires the graphicx package
   \caption{Performance comparison between Algorithm \ref{alg:new-alg} and two baselines  in an i.i.d. system.}
   \label{fig:compare}
\end{figure}

\subsection{Scenarios with Non i.i.d. System States}
Now consider system states $\omega[t] = (e[t], \mathbf{s}[t])$ that are not i.i.d. generated on each slot. Assume the subband channels $\mathbf{s}[t] = (s_{1}[t], s_{2}[t])$ evolve according to a $2$-state Markov chain. The first state of the Markov chain is $(s_{1} = 0.45, s_{2}=1.2)$ and the second state is $(s_{1} = 1, s_{2}=0.2)$.  The transition probability of the Markov chain is $ \left[\begin{array}{cc}  1/15  & 14/15 \\ 2/3  &1/3\end{array}\right]$ where the $(i,j)$-th entry denotes the  Markov chain's transition probability from state $i$ to state $j$.  The harvested energy $e[t]$ is still i.i.d. generated from the uniform distribution over interval $[0,3]$.

We repeat the same experiments as we did in Figures \ref{fig:large_battery}-\ref{fig:compare}. The only difference is the channel subbands are now time-correlated and evolve according to the above $2$-state Markov chain. The observations from Figures \ref{fig:markov_large_battery}-\ref{fig:markov_compare} for such a non-i.i.d. system are consistent with observations in the i.i.d. case.

\begin{figure}[htbp]
\centering
   \includegraphics[width=0.465\textwidth,height=0.465\textheight,keepaspectratio=true]{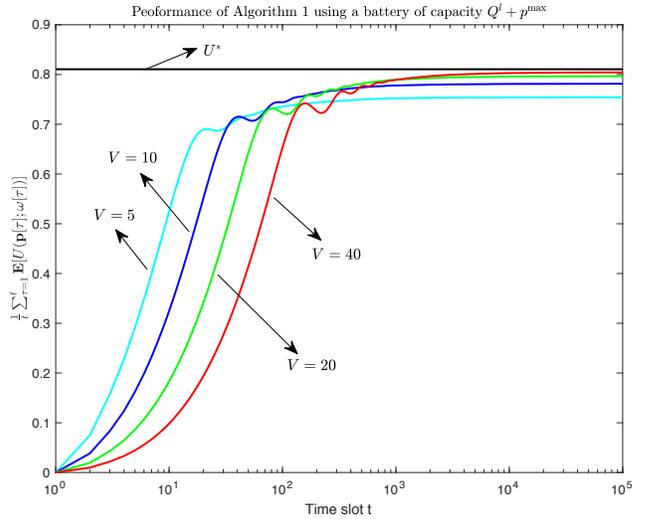} % requires the graphicx package
   \caption{Utility performance of Algorithm \ref{alg:new-alg} with $E[0] = E^{\max} = Q^l+p^{\max}$ for different $V$ in a non-i.i.d. system.}
   \label{fig:markov_large_battery}
\end{figure}

\begin{figure}[htbp]
\centering
   \includegraphics[width=0.465\textwidth,height=0.465\textheight,keepaspectratio=true]{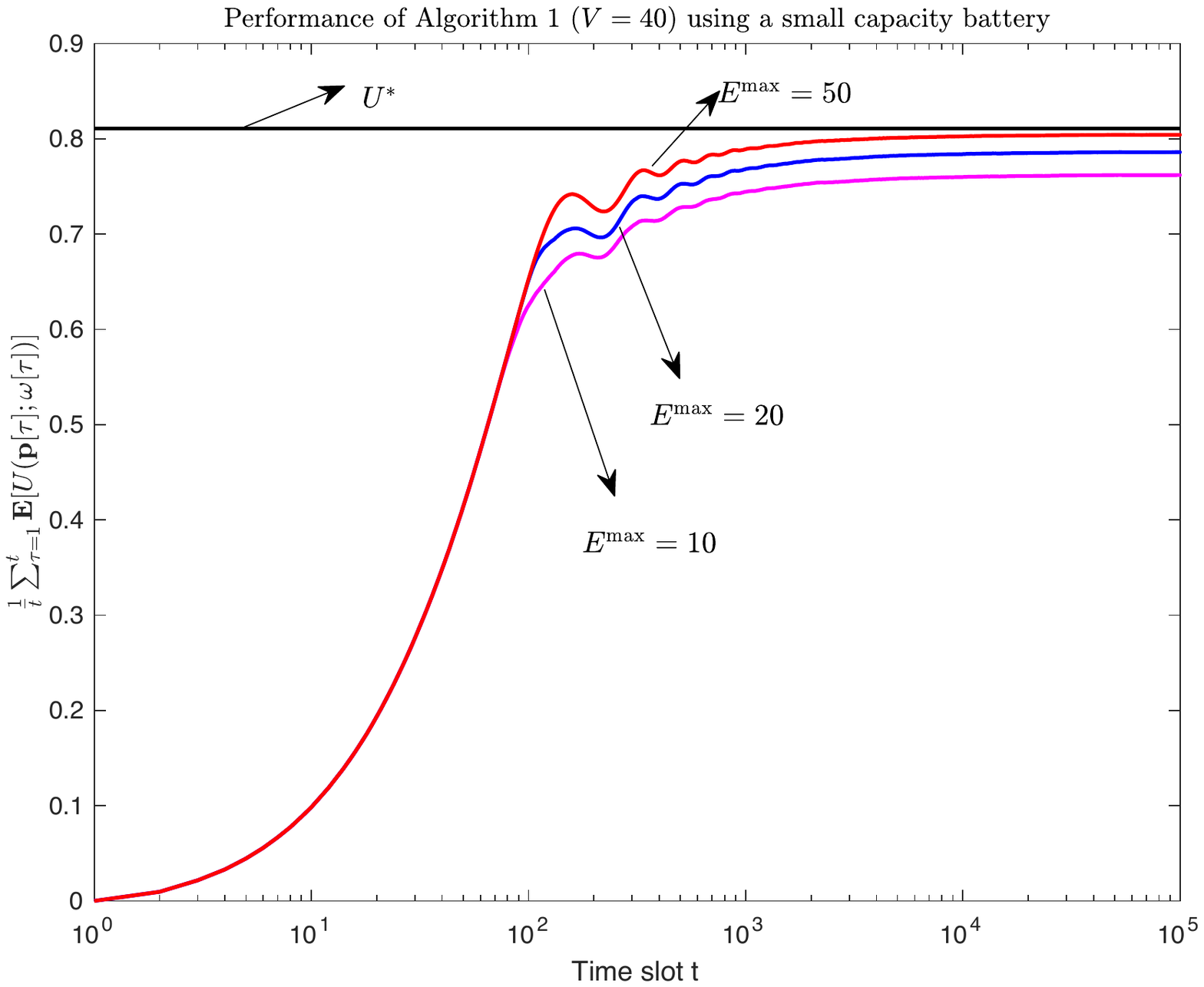} % requires the graphicx package
   \caption{Utility performance of Algorithm \ref{alg:new-alg} with $V=40$ for different $E^{\max}$ in a non-i.i.d. system.}
   \label{fig:markov_small_battery}
\end{figure}

\begin{figure}[htbp]
\centering
   \includegraphics[width=0.465\textwidth,height=0.465\textheight,keepaspectratio=true]{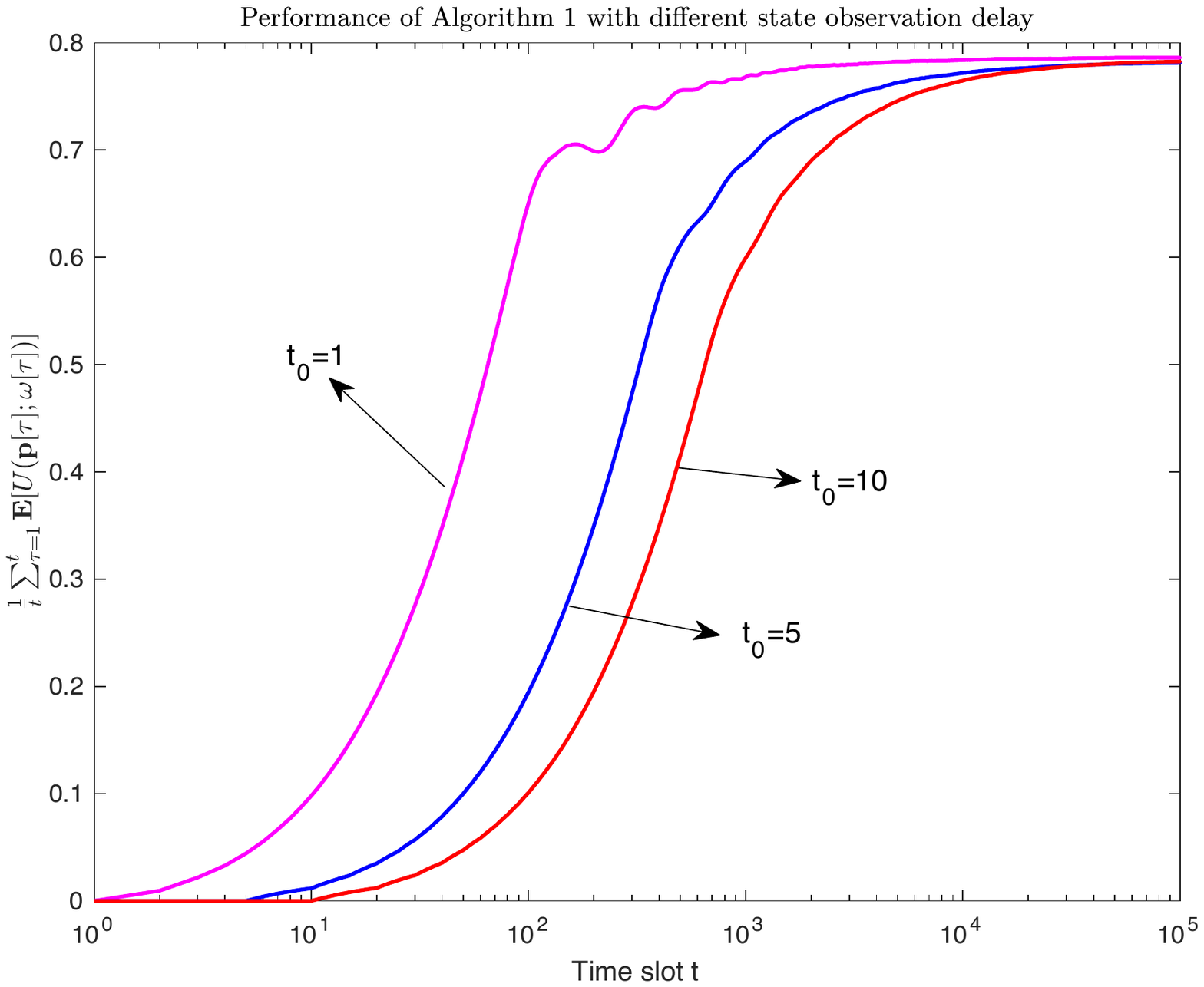} % requires the graphicx package
   \caption{Utility performance  of Algorithm \ref{alg:new-alg} for different system state observation delay  in a non-i.i.d. system.}
   \label{fig:markov_delay}
\end{figure}

\begin{figure}[htbp]
\centering
   \includegraphics[width=0.465\textwidth,height=0.465\textheight,keepaspectratio=true]{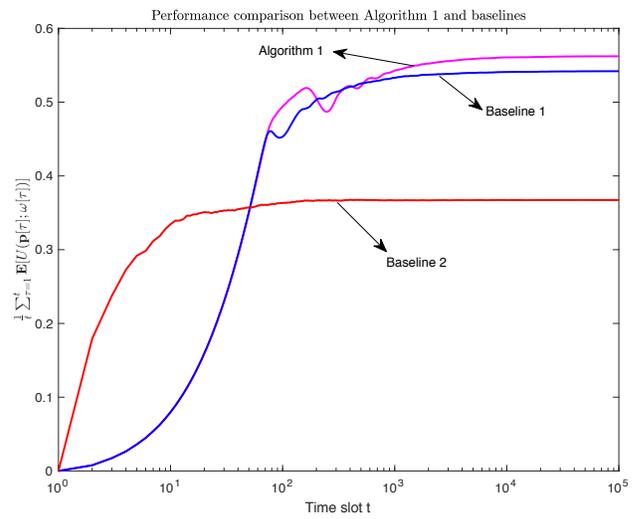} % requires the graphicx package
   \caption{Performance comparison between Algorithm \ref{alg:new-alg} and two baselines  in a non-i.i.d. system.}
   \label{fig:markov_compare}
\end{figure}

\section{Conclusion}
This paper develops a new learning aided power control algorithm for energy harvesting devices, without requiring the current system state or the distribution information.  This new algorithm can achieve an $O(\epsilon)$ optimal utility by using a battery with capacity $O(1/\epsilon)$ and a  convergence time of $O(1/\epsilon^2)$.  

\bibliographystyle{IEEEtran}
%% argument is your BibTeX string definitions and bibliography database(s)
\bibliography{mybibfile}

\end{document}